\newtheorem{theorem}{Theorem}[section]
\newtheorem{proposition}[theorem]{Proposition}
\newtheorem{lemma}[theorem]{Lemma}
\newtheorem{definition}[theorem]{Definition}
\theoremstyle{definition} 
\newtheorem{remark}[theorem]{Remark}
\newcommand{\C}{\mathbb C} 
\newcommand{\R}{\mathbb R}
\DeclareMathOperator{\sys}{{\rm sys}}
\numberwithin{equation}{section} 
\numberwithin{theorem}{section}
\numberwithin{figure}{section}
\begin{document}

\author[J.A.~Hoisington]{Joseph Ansel Hoisington} \address{Max Planck Institute for Mathematics}\email{hoisington@mpim-bonn.mpg.de}

\title[Energy-minimizing Maps]{Energy-minimizing Mappings of Complex Projective Spaces}

\keywords{Energy-minimizing maps, infima of energy functionals in homotopy classes, harmonic and pluriharmonic mappings}
\subjclass[2020]{Primary 53C43, 53C55 Secondary 53C35}

\begin{abstract}
We show that in all homotopy classes of mappings from complex projective space to Riemannian manifolds, the infimum of the energy is proportional to the infimal area in the homotopy class of mappings of the 2-sphere which represents the induced homomorphism on the second homotopy group.  We then establish a family of optimal lower bounds for a larger class of energy functionals for mappings from real and complex projective space to Riemannian manifolds and characterize the mappings which attain these lower bounds.  
\end{abstract}

\maketitle


\section{Introduction} 
\label{introduction} 

This paper's first result determines the infimum of the energy in a homotopy class of mappings from complex projective space to a Riemannian manifold: 

\begin{theorem}
\label{infimum}

Let $(\C P^{N},g_{0})$ be complex projective space, with its canonical Riemannian metric normalized so the maximum of its sectional curvature is $4$.  Let $\Phi$ be a homotopy class of mappings from $(\C P^{N},g_{0})$ to a Riemannian manifold $(M,g)$ and $\varphi$ the homotopy class of mappings from the $2$-sphere to $(M,g)$ represented by composing the inclusion $\C P^{1} \subseteq \C P^{N}$ with $F \in \Phi$.  Let $A^{\star}$ be the infimal area of mappings in $\varphi$; that is: 

\begin{equation}
\label{cpn_invariant}
\displaystyle A^{\star} = \inf\limits_{f \in \varphi} \int\limits_{S^{2}} \sqrt{\det(df^{T} \circ df)} dA. 
\end{equation}

Then, letting $E_{2}(F)$ be the energy of a map $F$ and $C_{N} = \frac{\pi^{N-1}}{(N-1)!}$, 

\begin{equation}
\label{infimum_equation}
\displaystyle \inf\limits_{F \in \Phi} E_{2}(F) = C_{N} A^{\star}. 
\end{equation}   
\end{theorem}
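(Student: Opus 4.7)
My plan is to prove both inequalities via an integral-geometric identity expressing $E_2(F)$ as an average of the energies of restrictions to complex projective lines. Let $\mathcal{L}$ denote the Grassmannian of complex projective lines in $\C P^{N}$, with its $U(N+1)$-invariant probability measure $d\mu$. The first step is to establish
\[
E_2(F) \;=\; C_N \int_{\mathcal{L}} E_2(F|_L)\, d\mu(L).
\]
The pointwise ingredient is: at each $x\in\C P^{N}$, averaging the orthogonal projection onto a complex tangent line $\ell \subset T_x\C P^{N}$ over the $\C P^{N-1}$ of such lines (with the $U(N)$-invariant probability measure) yields $\tfrac{1}{N}I$, since that is the unique $U(N)$-invariant, $J$-commuting, self-adjoint operator with the correct trace. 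Applied to the quadratic form $v\mapsto|dF(v)|^{2}$, this shows that the average over $\ell$ of the energy density of $F|_\ell$ at $x$ equals $\tfrac{1}{N}\,e(F)(x)$. Integrating in $x$ and swapping the order on the incidence variety $\{(x,L):x\in L\}$ via $U(N+1)$-homogeneity produces the identity; the constant $C_N=\pi^{N-1}/(N-1)!$ is pinned down by the check $F=\mathrm{id}_{\C P^{N}}$, where both sides equal $\pi^{N}/(N-1)!$.

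The lower bound then follows immediately. Since $U(N+1)$ acts transitively on $\mathcal{L}$, every line $L$ is isotopic to the standard $\C P^{1}\hookrightarrow\C P^{N}$, so $F|_L$ represents $\varphi$; combined with the standard pointwise inequality $e(f) \geq |\mathrm{Jac}(f)|$ for maps of Riemannian surfaces, this gives $E_2(F|_L) \geq \mathrm{area}(F|_L) \geq A^{\star}$, and integrating against $d\mu$ yields $E_2(F) \geq C_N A^{\star}$.

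For the upper bound, given $\varepsilon>0$, I would pick $f\in\varphi$ with $E_2(f) < A^{\star}+\varepsilon$ (possible by the classical $N=1$ case: $\inf_{\varphi} E_2 = \inf_{\varphi}\mathrm{area} = A^{\star}$ on $\pi_2$) and set $F_0 = f\circ\pi$, where $\pi:\C P^{N}\dashrightarrow\C P^{1}$ is the rational projection $[z_0{:}\cdots{:}z_N]\mapsto[z_0{:}z_1]$. It is holomorphic off $\C P^{N-2}=\{z_0=z_1=0\}$, which has real codimension $4$, so $F_0$ extends to a $W^{1,2}$-map on $\C P^{N}$. For almost every $L\in\mathcal{L}$, $\pi|_L:L\to\C P^{1}$ is a degree-$1$ biholomorphism, so the two-dimensional conformal invariance of energy gives $E_2(F_0|_L)=E_2(f)$; the integral identity above then yields $E_2(F_0)=C_N E_2(f)<C_N(A^{\star}+\varepsilon)$.

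Finally, $[F_0]$ may differ from $\Phi$ by an element in the kernel of the restriction $[\C P^{N},M]\to[\C P^{1},M]$. Any such element is realizable by a map supported in an arbitrarily small ball, with energy scaling like $r^{2N-2}\to 0$ as the radius $r\to 0$ (the condition $2N\geq 4$ is crucial); gluing in such a low-energy representative modifies $[F_0]$ to $\Phi$ at negligible energy cost, completing the upper bound. The main technical obstacle I anticipate lies in this upper-bound step — verifying $W^{1,2}$-regularity and the energy computation near the codim-$4$ indeterminacy locus of $\pi$, and executing the homotopy-class adjustment while preserving the energy bound. The lower bound, by contrast, should reduce cleanly to the integral identity and the pointwise area $\leq$ energy inequality.
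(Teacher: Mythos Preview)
Your integral-geometric identity and the lower bound match the paper exactly (the paper states the identity as Lemma~\ref{cpn_energy_formula_lemma} and derives $E_2(F)\geq C_N A^\star$ in the proof of Theorem~\ref{cpn_p_energy_thm}). The divergence, and the gap, is in the upper bound. The paper does not build a map from $f\circ\pi$ and then repair its homotopy class; instead it starts from an arbitrary $F_0\in\Phi$, uses the homotopy extension property for the cofibration $\C P^1\hookrightarrow\C P^N$ to obtain a smooth $F_1\in\Phi$ whose restriction to a fixed line $\mathcal{P}_0$ is a chosen low-energy representative $f_1\in\varphi$, and then precomposes with the \emph{smooth} biholomorphism $T_\lambda:[z_0{:}\cdots{:}z_N]\mapsto[\lambda z_0{:}\lambda z_1{:}z_2{:}\cdots{:}z_N]$. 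Since each $T_\lambda$ is isotopic to the identity, $F_1\circ T_\lambda\in\Phi$ for every finite $\lambda$; since $T_\lambda|_{\mathcal{P}}$ is conformal on each line, $E_2((F_1\circ T_\lambda)|_{\mathcal{P}})=E_2(F_1|_{T_\lambda(\mathcal{P})})\to E_2(f_1)$ for almost every $\mathcal{P}$, and dominated convergence in the integral identity finishes. Your rational projection $\pi$ is precisely the pointwise limit of $T_\lambda$ as $\lambda\to\infty$, so your construction is the singular endpoint of the paper's one-parameter family; keeping $\lambda$ finite removes both obstacles you flagged.

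The specific failure in your homotopy-correction step: gluing in a map supported in a small $2N$-ball changes the homotopy class only by an element of $\pi_{2N}(M)$ (via the pinch map $\C P^N\to\C P^N\vee S^{2N}$), whereas the fiber of $[\C P^N,M]\to[\C P^1,M]$ is governed by obstructions in $H^{2k}(\C P^N,\C P^1;\pi_{2k}(M))\cong\pi_{2k}(M)$ for each $k=2,\ldots,N$. For $N\geq 3$ your small-ball surgery cannot reach the intermediate classes. The statement you actually need---that $\inf_{F\in\Phi}E_2(F)$ depends only on $\Phi_*$ on $\pi_2$---is true, but it is essentially White's theorem in \cite{Wh1}, proved by a cell-by-cell deformation rather than a single local modification. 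There is also a prior issue: $f\circ\pi$ is discontinuous along $\C P^{N-2}$, so it has no classical homotopy class, and the class of any smoothing already depends on choices made near the indeterminacy locus. The paper's homotopy-extension-plus-$T_\lambda$ device sidesteps all of this and never leaves $\Phi$.
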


After proving Theorem \ref{infimum}, we will discuss conditions under which this infimum is realized by a continuous map $F \in \Phi$.  One source of examples is the theorem of Lichnerowicz \cite{Li1} that holomorphic and antiholomorphic maps between compact K\"ahler manifolds minimize energy in their homotopy class---one can show via a direct calculation that any such map of complex projective space has energy equal to the infimal value in (\ref{infimum_equation}), cf. Proposition \ref{holomorphic_corollary}.  We will explain in Remark \ref{pluriharmonic_remark} how, for maps of complex projective space that minimize energy in their homotopy class, the following theorem of Ohnita leads to a partial converse to Lichnerowicz's result:  

\begin{theorem}{\em (Ohnita \cite{Oh1})}
\label{ohnita_theorem}
A stable harmonic map $F:(\C P^{N},g_{0}) \rightarrow (M,g)$ from complex projective space to a Riemannian manifold is pluriharmonic; that is, letting $\alpha_{F}$ be the second fundamental form of $F$ (as in Definition \ref{second_fund_form_defn}) and $J$ the complex structure of $\C P^{N}$, $\alpha_{F}(JV,JW) = -\alpha_{F}(V,W)$. 
\end{theorem} 

The determination of the infimum in Theorem \ref{infimum} builds on work of Croke, who showed in \cite{Cr1} that the energy of a map from complex projective space to a Riemannian manifold is bounded below by the value $C_{N}A^{\star}$ in (\ref{infimum_equation}), by showing that this lower bound is the infimum in all homotopy classes.  We will extend Croke's lower bound for energy to a lower bound for a larger class of functionals:  

\begin{theorem}
\label{cpn_p_energy_thm} 
Let $F:(\C P^{N},g_{0}) \rightarrow (M,g)$ be a Lipschitz map from complex projective space to a Riemannian manifold, $E_{p}(F)$ its $p$-energy (as in Definition \ref{p_energy_def}), and $A^{\star}$ the invariant associated to the homotopy class of $F$ in Theorem \ref{infimum}.  Then for all $p > 2$,  

\begin{equation}
\label{cpn_p_energy_thm_eqn}
\displaystyle E_{p}(F) \geq \frac{\pi^{N}}{2 N!} \left( \frac{2N}{\pi} A^{\star} \right)^{\frac{p}{2}}. \smallskip
\end{equation}

Equality for at least one $p > 2$ implies that $F$ is a homothety onto a pluriharmonically immersed minimal submanifold, and thus that equality holds in (\ref{cpn_p_energy_thm_eqn}) for all $p \geq 2$. 
\end{theorem}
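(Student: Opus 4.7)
The plan is to derive the inequality by combining Theorem \ref{infimum} with Jensen's inequality applied to $|dF|^{2}$, and to characterize equality by combining the equality case of Theorem \ref{infimum} (specifically, the rigidity of the pointwise AM--GM step in Croke's proof) with Ohnita's theorem on stable harmonic maps (Theorem \ref{ohnita_theorem}).

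For the inequality, Theorem \ref{infimum} gives $\int_{\C P^{N}}|dF|^{2}\,dV \geq 2 C_{N} A^{\star} = \frac{2\pi^{N-1}}{(N-1)!}A^{\star}$, with the convention $E_{2}(F)=\tfrac{1}{2}\int|dF|^{2}\,dV$. Since $t \mapsto t^{p/2}$ is convex on $[0,\infty)$ for $p \geq 2$, Jensen's inequality on the probability measure $V^{-1}\,dV$ (with $V=\vol(\C P^{N},g_{0})=\pi^{N}/N!$) yields
\begin{equation*}
\int_{\C P^{N}}|dF|^{p}\,dV \; \geq \; V^{\,1-p/2}\left(\int_{\C P^{N}}|dF|^{2}\,dV\right)^{p/2}.
\end{equation*}
Substituting the $L^{2}$-bound and simplifying gives $E_{p}(F) \geq \frac{\pi^{N}}{2 N!}\bigl(\tfrac{2N}{\pi} A^{\star}\bigr)^{p/2}$, which is (\ref{cpn_p_energy_thm_eqn}); specializing to $F=\mathrm{id}_{\C P^{N}}$, where $|dF|^{2}\equiv 2N$ and $A^{\star}=\pi$, confirms that the constants are sharp.

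For the equality statement, equality in (\ref{cpn_p_energy_thm_eqn}) for some $p>2$ forces equality in both preceding steps. Equality in Jensen forces $|dF|^{2}$ to be almost everywhere constant, while equality in Theorem \ref{infimum} makes $F$ an $E_{2}$-minimizer in $\Phi$, hence a stable harmonic map (after invoking regularity for such minimizers); Theorem \ref{ohnita_theorem} then makes $F$ pluriharmonic. Tracing through Croke's proof of Theorem \ref{infimum}, the crucial pointwise inequality bounds the area element on each $\C P^{1}$-slice above by the corresponding energy element via AM--GM, and its equality case requires $dF$ to be conformal on that complex line. Since equality must hold on almost every line, one obtains $|dF(V)| = |dF(JV)|$ and $dF(V)\perp dF(JV)$ for every complex tangent direction $V$. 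Coupling this conformality on every complex line with constancy of the trace $|dF|^{2}$ and with pluriharmonicity, while using that each $F|_{\C P^{1}}$ is a harmonic representative realizing the area infimum $A^{\star}$, identifies $F^{*}g$ as a positive constant multiple of $g_{0}$; thus $F$ is a homothety, and pluriharmonicity makes its image a pluriharmonically immersed minimal submanifold. With $|dF|$ genuinely constant and $E_{2}(F)=C_{N} A^{\star}$, (\ref{cpn_p_energy_thm_eqn}) becomes an equality for every $p \geq 2$.

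The main obstacle is the equality analysis. Jensen and Theorem \ref{infimum} combine mechanically to produce the lower bound, but upgrading "stable harmonic map with constant energy density" to "homothety onto a pluriharmonically immersed minimal submanifold" is delicate. Conformality on each complex line alone forces only that the symmetric operator $dF_{p}^{*}dF_{p}$ commutes with $J$ and has constant trace, which a priori permits anisotropic eigenvalue distributions on different complex lines; ruling this out requires carefully coupling the pointwise rigidity from Croke's argument with pluriharmonicity and the fact that $F|_{\C P^{1}}$ realizes $A^{\star}$ on almost every projective line.
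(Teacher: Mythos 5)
Your derivation of the inequality is correct and is essentially the paper's argument: the bound $E_{2}(F)\geq C_{N}A^{\star}$ comes from the slicing formula over the space $\mathcal{L}$ of projective lines (Lemma \ref{cpn_energy_formula_lemma}) together with $E_{2}(F|_{\mathcal{P}})\geq|F(\mathcal{P})|\geq A^{\star}$, and your Jensen step is the same as the paper's H\"older step; the constants check out. The first half of your equality analysis also tracks the paper: equality for one $p>2$ forces $E_{2}(F)=C_{N}A^{\star}$, so $F$ is an energy minimizer, hence a smooth stable harmonic map, pluriharmonic by Theorem \ref{ohnita_theorem}, and $|dF|$ is constant, giving equality for all $p\geq 2$. (One small structural caveat: in the paper the lower bound $E_{2}(F)\geq C_{N}A^{\star}$ is proved inside this theorem and only later shown to be the infimum, so quoting Theorem \ref{infimum} for it is harmless but slightly circular relative to the paper's ordering; Croke's bound in \cite{Cr1} covers it independently.)

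The genuine gap is exactly the step you flag and do not resolve: passing from ``pluriharmonic, $F^{*}g$ Hermitian, conformal on each line, constant trace $|dF|^{2}$'' to ``$F^{*}g$ is a constant multiple of $g_{0}$.'' As you note, a Hermitian form with constant trace can a priori have different eigenvalues along different complex directions, and no amount of per-line conformality rules this out pointwise; ``carefully coupling'' these facts is not an argument. The paper closes this with a cohomological computation you are missing: by Theorem \ref{bochner_result} the $2$-form $\omega^{*}(V,W)=F^{*}g(JV,W)$ is closed, and since $\int_{\mathcal{P}}\omega^{*}=|F(\mathcal{P})|=A^{\star}$ for every line $\mathcal{P}$, $\omega^{*}$ is cohomologous to $\frac{A^{\star}}{\pi}\omega_{0}$. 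Hence $Vol_{g_{0}}(\C P^{N},F^{*}g)=\frac{1}{N!}\int_{\C P^{N}}\omega^{*^{N}}=\frac{A^{\star^{N}}}{N!}$, which makes the elementary AM--GM/H\"older bound of Lemma \ref{elementary_lemma} coincide, for $p\geq 2N$, with the bound (\ref{cpn_p_energy_thm_eqn}) already known to be attained for all $p\geq 2$. Equality in Lemma \ref{elementary_lemma} for some $p>2N$ forces $dF_{x}$ to be a homothety with a fixed constant factor at almost every point, i.e.\ $F^{*}g=\kappa^{2}g_{0}$, and pluriharmonicity then gives the minimal, pluriharmonically immersed image. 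Without this (or an equivalent) global argument pinning down $Vol_{g_{0}}(\C P^{N},F^{*}g)$, your proof of the rigidity statement is incomplete.
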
 

The results of White \cite{Wh1} imply that for $1 \leq p < 2$, the infimum of the $p$-energy in any homotopy class of mappings from complex projective space to a Riemannian manifold is $0$.  In this sense, the lower bound for $p$-energy for $p \geq 2$ given by Theorems \ref{infimum} and \ref{cpn_p_energy_thm} is optimal.  Special cases of Theorems \ref{infimum} and \ref{cpn_p_energy_thm} state that the identity mapping of complex projective space minimizes $p$-energy in its homotopy class for $p \geq 2$.  For $p > 2$, Theorem \ref{cpn_p_energy_thm} implies that the only $p$-energy-minimizing maps in its homotopy class are isometries, however for $p = 2$, Lichnerowicz's results \cite{Li1} cited above imply that all holomorphic maps of complex projective space minimize energy in their homotopy class.  In the homotopy class of the identity, this includes projective linear transformations which are not isometries. \\  

White's results in \cite{Wh1} also imply that the infimum of the energy is $0$ in all homotopy classes of mappings from quaternionic projective space, the Cayley projective plane, or the sphere of dimension $3$ or greater to a Riemannian manifold.  (For a more general statement proven in \cite{Wh1} from which this follows, see Remark \ref{homotopy_dependence}.)  Together, these results and Theorem \ref{infimum} determine the infimum of the energy in all homotopy classes of mappings of compact rank-$1$ Riemannian symmetric spaces other than real projective space, and in particular for all simply-connected spaces in this family.  For real projective space, we will establish some results which are similar to our results for complex projective space above, but we will also show that there are some potentially noteworthy differences. \\  

Croke showed via the results in \cite{Cr1} that the identity mapping of real projective space minimizes energy in its homotopy class.  We will extend these results to the following lower bound for the entire class of $p$-energy functionals:  

\begin{theorem}{\em (\cite{Cr1} for $p=2$)}
\label{rpn_p_energy_thm}
Let $(\R P^{n},g_{0})$, $n \geq 2$, be real projective space with its Riemannian metric of constant curvature $1$.  Let $F:(\R P^{n},g_{0}) \rightarrow (M,g)$ be a Lipschitz map, $E_{p}(F)$ its $p$-energy, $\gamma$ a simple closed geodesic in $(\R P^{n},g_{0})$, and $L^{\star}$ the infimum of the lengths of closed curves freely homotopic to $F_{*}(\gamma)$.  Then for all $p \geq 1$, letting $\sigma(n)$ be the volume of the unit $n$-sphere, 

\begin{equation}
\label{rpn_thm_eqn} 
\displaystyle E_{p}(F) \geq \frac{\sigma(n)}{4} \left( \frac{\sqrt{n}}{\pi} L^{\star} \right)^{p}. \smallskip 
\end{equation}

Equality for at least one $p > 1$ implies that $F$ is a homothety onto a totally geodesic submanifold, and thus that equality holds in (\ref{rpn_thm_eqn}) for all $p \geq 1$.  If $F$ is a smooth immersion, equality for $p=1$ also implies this. \end{theorem}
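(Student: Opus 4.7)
The plan is to mimic Croke's argument for $p=2$ from \cite{Cr1}, using a Crofton-type averaging over the unit tangent bundle $T^{1}\R P^{n}$, and then to handle $p\neq 2$ by a pointwise Jensen estimate or a global H\"older reduction as appropriate. The starting observation is that every closed geodesic of $(\R P^{n},g_{0})$ has length exactly $\pi$ and represents the generator $\gamma$ of $\pi_{1}(\R P^{n})\cong\Z/2\Z$; consequently, for each $(x,v)\in T^{1}\R P^{n}$ the loop $F\circ\gamma_{x,v}$ is freely homotopic to $F_{*}(\gamma)$, so $\length(F\circ\gamma_{x,v})\geq L^{\star}$, and H\"older's inequality on $[0,\pi]$ yields $\int_{0}^{\pi}|dF(\gamma'_{x,v}(t))|^{p}\,dt\geq \pi^{1-p}(L^{\star})^{p}$.

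I would then integrate this estimate over $(x,v)\in T^{1}\R P^{n}$ against Liouville measure. Using the $\pi$-periodicity and Liouville-invariance of the geodesic flow, the left-hand side becomes $\pi\int_{\R P^{n}}\int_{S_{x}}|dF(v)|^{p}\,d\sigma(v)\,d\vol(x)$, while the right-hand side is $\pi^{1-p}(L^{\star})^{p}\,\vol(T^{1}\R P^{n})=\pi^{1-p}(L^{\star})^{p}\,\sigma(n-1)\sigma(n)/2$. This Crofton step reduces the theorem to passing from a fiberwise spherical integral of $|dF(v)|^{p}$ to a constant multiple of the $p$-energy density $|dF|^{p}$.

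This conversion depends on $p$. For $1\leq p\leq 2$, the function $t\mapsto t^{p/2}$ is concave, so Jensen applied to the quadratic form $v\mapsto|dF_{x}(v)|^{2}=v^{T}(dF^{T}dF)v$ on the sphere $S_{x}$ gives the pointwise bound $\int_{S_{x}}|dF(v)|^{p}\,d\sigma\leq \sigma(n-1)\,(|dF|^{2}/n)^{p/2}$, which inserted into the Crofton estimate produces \eqref{rpn_thm_eqn} immediately. The main obstacle is the regime $p>2$, where Jensen runs in the wrong direction. The workaround I have in mind is to apply the global H\"older inequality $\int_{\R P^{n}}|dF|^{2}\,d\vol\leq \vol(\R P^{n})^{1-2/p}\bigl(\int|dF|^{p}\,d\vol\bigr)^{2/p}$ and to invoke Croke's $p=2$ bound from \cite{Cr1}; a short arithmetic check shows that the constants combine to exactly the right-hand side of \eqref{rpn_thm_eqn}.

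For the equality statement I would trace back through the inequalities. H\"older on each geodesic forces $|dF(\gamma')|$ to be constant along almost every closed geodesic; Jensen on $S_{x}$ (for $p\leq 2$), or equality in both the global H\"older estimate and in Croke's $p=2$ bound (for $p>2$), forces $dF^{T}dF$ to be a scalar multiple of the identity at each point; and $\length(F\circ\gamma)=L^{\star}$ must hold for almost every closed geodesic. Together these conditions force $F$ to be a homothety onto a totally geodesic submanifold, whence equality holds simultaneously for all $p\geq 1$. The subtle point is $p=1$, where H\"older on each geodesic is automatically an equality and produces no pointwise constraint on $|dF(\gamma')|$; this is why the smooth-immersion hypothesis must be added in that case to upgrade pointwise isotropy of $dF^{T}dF$ to a genuine homothety, for instance via a Radon-transform argument on $\R P^{n}$.
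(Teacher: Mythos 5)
Your proof of the inequality (\ref{rpn_thm_eqn}) is correct and runs on the same integral-geometric engine as the paper, just organized differently: the paper first bounds $E_{1}(F)$ (fibrewise Cauchy--Schwarz applied to Croke's formula, then Fubini over the space of closed geodesics) and obtains every $p>1$ from the single global H\"older step $E_{p}(F)\geq (4/\sigma(n))^{p-1}E_{1}(F)^{p}$, whereas you run the per-geodesic H\"older/Liouville-invariance argument at exponent $p$ directly and convert the fibre integral by Jensen for $1\leq p\leq 2$, falling back on Croke's $p=2$ bound plus global H\"older for $p>2$. The constants in both of your regimes check out, so this half is fine and your $1\leq p\leq 2$ argument is a legitimate self-contained variant.

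The equality analysis is where your sketch has a genuine gap for $p>1$: you pass from almost-everywhere conditions on a map that is only assumed Lipschitz ($F^{*}g=c\,g_{0}$ a.e., a.e.\ closed geodesic mapping to a curve of length $L^{\star}$) directly to ``$F$ is a homothety onto a totally geodesic submanifold,'' with no regularity step. The paper establishes smoothness first: equality at some $p>1$ forces equality at $p=2$, and since the $p=2$ bound depends on $F$ only through its homotopy class (via $L^{\star}$), $F$ minimizes energy in its homotopy class, hence is weakly harmonic, hence smooth; only then do constancy of $|dF|$ and the $p=1$ conditions produce a homothety with totally geodesic image. You need this (or a direct argument that a Lipschitz map sending every geodesic to an affinely parametrized geodesic of $(M,g)$ is totally geodesic and therefore smooth); as written the conclusion does not follow. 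A smaller slip: at $p=2$ your Jensen step is vacuous, so isotropy of $dF^{T}dF$ must instead be extracted from constancy of $|dF(\gamma')|$ along a.e.\ geodesic together with ${\rm length}(F\circ\gamma)=L^{\star}$, which give $|dF_{x}(v)|=L^{\star}/\pi$ a.e.

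For $p=1$ under the immersion hypothesis, note that this is the deepest part of the paper's proof: there one only gets $F^{*}g=\mu(x)g_{0}$ with $\mu$ a priori non-constant, and the paper shows $F^{*}g$ is a Blaschke metric, invokes the Berger--Green--Kazdan--Yang theorem and conformal rigidity of $(\R P^{n},g_{0})$ to force $\mu$ constant. Your parenthetical ``Radon-transform argument'' is not developed, but it points to a genuinely different and arguably simpler route: equality gives $\int_{\gamma}\sqrt{\mu}=L^{\star}$ for a.e.\ (hence, by continuity of $\mu$, all) closed geodesics $\gamma$, and injectivity of the Funk (great-circle) transform on $\R P^{n}$ --- which reduces to Funk's classical theorem on $S^{2}$ by restricting to totally geodesic $2$-spheres --- forces $\sqrt{\mu}\equiv L^{\star}/\pi$, bypassing the Blaschke conjecture entirely; the totally geodesic image then comes from the minimality of each $F\circ\gamma$ in its free homotopy class. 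If you carry that out carefully it would be a complete alternative treatment of the $p=1$ case, but as submitted it is only a gesture, not a proof.
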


Theorem \ref{rpn_p_energy_thm} implies that the identity mapping of real projective space minimizes $p$-energy in its homotopy class for all $p \geq 1$, as the identity mapping of complex projective space minimizes $p$-energy in its homotopy class for $p \geq 2$.  Unlike complex projective space however, in which the energy corresponding to $p = 2$ admits a larger family of minimizing maps than the $p$-energy for $p > 2$, in real projective space the characterization of $p$-energy-minimizing maps is equally rigid for all $p \geq 1$.  For $p > 1$, the rigidity of the family of minimizing maps in Theorem \ref{rpn_p_energy_thm} can be derived from the result for $p=2$, however this argument does not address the case $p=1$.  The argument we give to address this case draws on the proof of the Blaschke conjecture by Berger and Kazdan, cf. \cite{Be1}. \\ 

Although Theorem \ref{rpn_p_energy_thm} gives a lower bound for energy for maps of real projective space which is similar to the lower bound for complex projective space implied by Theorem \ref{infimum}, determining the infimum of the energy in a homotopy class may be more complicated in this case.  First, consider this problem for the real projective plane: let $g$ be a Riemannian metric on $\R P^{2}$, of area $A(\R P^{2},g)$, and let $\psi$ be the homotopy class of the identity mapping of $\R P^{2}$, viewed as a collection of maps from $(\R P^{2},g_{0})$ to $(\R P^{2},g)$.  Because the energy of a map of a surface is bounded below by the area of its image, with equality precisely for conformal maps, we have $E_{2}(f) \geq A(\R P^{2},g)$ for $f \in \psi$.  However Pu's inequality (quoted in Theorem \ref{pu_theorem}) implies that, in the notation of Theorem \ref{rpn_p_energy_thm}, $A(\R P^{2},g) \geq \frac{2}{\pi} L^{\star^{2}}$, with equality only if $g$ has constant curvature.  Therefore, unless $g$ is isometric to a rescaling of $g_{0}$, the infimum of the energy in $\psi$ is strictly greater than the lower bound in Theorem \ref{rpn_p_energy_thm}. \\ 

In fact, the uniformization theorem implies that in any homotopy class of mappings from $(\R P^{2},g_{0})$ to a Riemannian manifold, the infimum of the energy is equal to the infimal area, cf. Lemma \ref{2-sphere_lemma}.  But although this determines the infimum of the energy in homotopy classes of mappings of $(\R P^{2},g_{0})$, Lemma \ref{rpn_area_lemma} suggests that for $n \geq 3$, the infimum of the energy in homotopy classes of mappings of $(\R P^{n},g_{0})$ is also usually greater than the lower bound in Theorem \ref{rpn_p_energy_thm} and may be more difficult to determine.  In Proposition \ref{rp3_inf_thm}, however, we will give upper and lower bounds for the infimum of the energy in a homotopy class of mappings of real projective $3$-space.  This result, together with a result of White which we quote in Theorem \ref{white_prop}, suggests that this infimum may be determined in part by the same geometric data as in our results for complex projective space.  We study this problem in more detail in \cite{Hois1}. 

\subsection*{Outline and Notation:} In Section \ref{stable_harmonic_mappings} we review some facts about harmonic maps and discuss some properties of pluriharmonic maps of complex projective space.  We also present a new proof of Theorem \ref{ohnita_theorem} and discuss its relationship to Ohnita's proof of this result in \cite{Oh1}, and to work of Burns, Burstall, de Bartolomeis and Rawnsley \cite{BBdBR1} and Lawson and Simons \cite{LS1}.  In Section \ref{lower_bounds}, we prove Theorems \ref{cpn_p_energy_thm} and \ref{rpn_p_energy_thm} and establish more properties of pluriharmonic maps of complex projective space.  In Section \ref{infima}, we prove Theorem \ref{infimum}.  We also discuss some corollaries of our results about pluriharmonic maps and some issues involved in finding the infimal energy in a homotopy class of mappings of real projective space.  Throughout, we write $\sigma(n)$ for the volume of the unit $n$-sphere, $J$ for the complex structure of any complex manifold, and $g_{0}$ for the canonical Riemannian metric on any compact rank-$1$ symmetric space---note that $(\C P^{N},g_{0})$ has volume $\frac{\pi^{N}}{N!}$ when normalized as in Theorem \ref{infimum}.  We write $E_{2}(F)$ for the energy, and $E_{p}(F)$ for the $p$-energy, of any map $F$ between Riemannian manifolds. 

\subsection*{Acknolwedgements:} I am happy to thank Werner Ballmann, Christopher Croke, and Joseph H.G. Fu for helpful conversations about this work, and the Max Planck Institute for Mathematics for support and hospitality.  


\section{Stable Harmonic and Pluriharmonic Mappings}  
\label{stable_harmonic_mappings} 


In this section we review some background about harmonic maps, and we discuss some properties of pluriharmonic maps of complex projectives space.  Because of Theorem \ref{ohnita_theorem}, we draw on these properties in discussing conditions under which a homotopy class of mappings of complex projective space contains an energy-minimizing map, and in proving Theorem \ref{cpn_p_energy_thm}.  We also present a new proof of Theorem \ref{ohnita_theorem}.  Like Ohnita's proof of Theorem \ref{ohnita_theorem} in \cite{Oh1}, our argument gives a slightly more general result, which we record in Propositions \ref{symmetric_space_prop} and \ref{cpn_pluriharmonic_prop}. \\ 

The energy of a Lipschitz map $F:(N^{n},h) \rightarrow (M^{m},g)$ of Riemannian manifolds is:  

\begin{equation}
\label{energy_eqn}
\displaystyle E_{2}(F) = \frac{1}{2} \int\limits_{N} |dF_{x}|^{2} dVol_{h}, 
\end{equation}
where $|dF_{x}|$ is the Euclidean norm of $dF:T_{x}N \rightarrow T_{F(x)}M$ at a point $x \in N$ at which $F$ is differentiable.  The Euler-Lagrange equation for maps which are critical for energy, called harmonic maps, can be formulated in terms of the second fundamental form of a map: 

\begin{definition}
\label{second_fund_form_defn}  

Let $F:(N,h) \rightarrow (M,g)$ be a smooth map of Riemannian manifolds, $\nabla^{h}$ and $\nabla^{g}$ the Levi-Civita connections of $(N,h)$ and $(M,g)$, and $F^{*}\nabla^{g}$ the induced connection in $F^{*}TM$.  The second fundamental form $\alpha_{F}$ of $F$ is the symmetric $F^{*}TM$-valued $2$-tensor on $N$ which, for vector fields $V$, $W$, satisfies $\alpha_{F}(V,W) = F^{*}\nabla^{g}_{V}F_{*}W - F_{*}(\nabla^{h}_{V}W)$. 
\end{definition} 

The section of $F^{*}TM$ given by taking the trace of the second fundamental form is called the tension field of the mapping $F$.  In \cite{ES1} Eells and Sampson showed that a smooth map of Riemannian manifolds is harmonic if and only if its tension field vanishes.  Continuous, weakly harmonic maps are smooth, cf. \cite[Ch.10]{Aub1}.  Continuous maps that minimizes energy in their homotopy class are therefore smooth, however a homotopy class may not contain an energy-minimizing map---for example, the infimum of the energy among maps homotopic to the identity mapping of the sphere $(S^{n},g_{0})$ of dimension $n \geq 3$ is $0$, but any such map $F$ is nonconstant and therefore has $E_{2}(F) > 0$.  A harmonic map is stable if the second variation of its energy is nonnegative.  Xin showed in \cite{Xi1} that for $n \geq 3$, the sphere $(S^{n},g_{0})$ does not admit any nonconstant stable harmonic map to any Riemannian manifold.  Ohnita showed in \cite{Oh3} that this is also the case for quaternionic projective space and the Cayley plane.  By way of a parallel with White's results in \cite{Wh1} and Theorem \ref{infimum}, which determine the infimum of the energy in all homotopy classes of mappings of simply-connected compact rank-$1$ symmetric spaces, the results of Xin and Ohnita, including Theorem \ref{ohnita_theorem}, determine all stable harmonic maps of these spaces. \\ 

A pluriharmonic map from a K\"ahler manifold to a Riemannian manifold is a map $F$ whose second fundamental form $\alpha_{F}$ satisfies the identity $\alpha_{F}(JV,JW) = - \alpha_{F}(V,W)$.  Pluriharmonic maps are harmonic.  Holomorphic and antiholomorphic maps between K\"ahler manifolds are pluriharmonic---more generally, pluriharmonicity and holomorphicity are related as follows: 

\begin{lemma}
\label{pluri_characterization_lemma}
For a smoooth map $F:(X,h)\rightarrow(M,g)$ from a K\"ahler manifold to a Riemannian manifold, the following are equivalent:  

\begin{itemize}
\item[\textbf{i.)}] $F$ is pluriharmonic. 
\item[\textbf{ii.)}]\label{pcliii} For any germ of a complex curve $\Sigma \subseteq X$, $F|_{\Sigma}$ is harmonic. 
\item[\textbf{iii.)}]\label{pclii} For any K\"ahler manifold $(Y,\widetilde{h})$ and holomorophic map $G:(Y,\widetilde{h}) \rightarrow (X,h)$, $F \circ G$ is pluriharmonic.  In particular, $F$ is pluriharmonic as a map $(X,\widetilde{h})\rightarrow(M,g)$ for any K\"ahler metric $\widetilde{h}$ on $X$.  
\end{itemize}
\end{lemma}

\begin{proof} That iii.) implies i.) and ii.) is immediate.  That ii.) implies i.) is a theorem of Rawnsley, cf. \cite[Section 4]{BBdBR1}.  To see that that i.) implies iii.), we calculate the second fundamental form of $F \circ G$: letting $\widetilde{F}_{*}:G^{*}TX \rightarrow (F \circ G)^{*}TM$ be the bundle homomorphism induced by $F_{*}:TX \rightarrow F^{*}TM$, we have $\alpha_{F \circ G}(V,W) = \alpha_{\widetilde{F}}(G_{*}V,G_{*}W) + \widetilde{F}_{*}\alpha_{G}(V,W)$, which implies that $\alpha_{F \circ G}(JV,JW) = -\alpha_{F \circ G}(V,W)$. \end{proof}

Pluriharmonic maps of complex projective space retain some properties of holomorphic maps, even when the target is only assumed to be a Riemannian manifold.  For the Riemann sphere, we have the following result of Lemaire:  

\begin{theorem}{\em (Lemaire \cite[Theorem 2.8]{Lem1})}
\label{harmonic_conf_thm}	
A harmonic map $f:(S^{2},g_{0}) \rightarrow (M,g)$ from the $2$-sphere to a Riemannian manifold is a conformal branched immersion. 
\end{theorem}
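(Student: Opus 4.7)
The plan is to prove this via the classical Hopf differential argument. The key observation is that for any harmonic map from a Riemann surface to a Riemannian manifold, one can associate a holomorphic quadratic differential (the Hopf differential) which measures the failure of conformality; on $S^2$ this differential must vanish.

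First, I would introduce isothermal coordinates. Working in a local holomorphic coordinate $z = x+iy$ on $S^2$, so that $g_0 = e^{2\rho}|dz|^2$, I would consider the complex-valued function
\begin{equation*}
\phi(z) = g\!\left(\tfrac{\partial f}{\partial z},\tfrac{\partial f}{\partial z}\right) = \tfrac{1}{4}\bigl[g(f_x,f_x) - g(f_y,f_y) - 2i\, g(f_x,f_y)\bigr].
\end{equation*}
The quadratic expression $\phi(z)\, dz^2$, known as the Hopf differential of $f$, is invariant under holomorphic changes of the coordinate $z$ and therefore defines a global section of $(T^{1,0}S^2)^{\otimes 2}$.

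Second, I would show that $\phi\, dz^2$ is holomorphic when $f$ is harmonic. The harmonic map equation in isothermal coordinates reads $f^*\nabla^g_{\partial_{\bar z}} f_z = 0$, where $f^*\nabla^g$ denotes the pullback connection along $f$ (the conformal factor $e^{2\rho}$ drops out because the domain is $2$-dimensional). Differentiating the definition of $\phi$ in the $\bar z$ direction and using compatibility of $\nabla^g$ with $g$ gives
\begin{equation*}
\frac{\partial \phi}{\partial \bar z} \,=\, 2\, g\!\left(f^*\nabla^g_{\partial_{\bar z}} f_z,\, f_z\right) \,=\, 0,
\end{equation*}
so $\phi$ is holomorphic in $z$.

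Third, I would invoke the fact that $S^2$ carries no nonzero holomorphic quadratic differentials. This can be seen directly: a holomorphic quadratic differential on $S^2$ pulled back by the two standard affine charts would be $\phi(z)\, dz^2$ on one chart and, after the change of coordinate $w = 1/z$, would acquire a factor of $w^{-4}$; polynomial growth at infinity then forces $\phi \equiv 0$. Hence the Hopf differential of $f$ vanishes identically, which is precisely the statement $|f_x|^2 = |f_y|^2$ and $g(f_x,f_y) = 0$, i.e., $f$ is weakly conformal. Writing this invariantly, $f^*g = \mu\, g_0$ for some nonnegative function $\mu$, and smoothness of $\mu$ follows from smoothness of $f$ (continuous weakly harmonic maps are smooth, as noted in the excerpt).

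Finally, the branched immersion property: a weakly conformal harmonic map into a Riemannian manifold satisfies, in local isothermal coordinates, $f_{z\bar z} \in \mathrm{Span}\{f_z, f_{\bar z}\}$ together with $g(f_z,f_z) = 0$. Standard arguments (for instance using the elliptic regularity behavior of $f_z$ as a solution of a $\bar\partial$-type equation, or directly expanding $f$ in local coordinates around a zero of $\mu$) show that the zeros of $\mu$ are isolated and that near each such point $f$ behaves like $z \mapsto z^k$ up to a smooth conformal factor, giving a branched immersion. The main technical step is this last regularity statement near branch points; the rest of the argument is essentially the holomorphicity of the Hopf differential combined with the vanishing of $H^0(S^2, K^{\otimes 2})$.
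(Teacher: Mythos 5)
The paper does not prove this statement at all: it is quoted directly from Lemaire \cite[Theorem 2.8]{Lem1}, so there is no internal proof to compare against. Your Hopf-differential argument is the classical proof of exactly this result, and it is correct in outline: harmonicity makes $\phi\,dz^2$ holomorphic, the vanishing of holomorphic quadratic differentials on $S^2$ gives weak conformality, and hence $f^*g=\mu\,g_0$ with $\mu$ smooth and nonnegative. Two remarks. First, the step you defer to ``standard arguments'' --- that a nonconstant weakly conformal harmonic map has isolated zeros of $df$ with local model $z\mapsto z^k$ --- is the genuinely substantive part of Lemaire's theorem; it requires writing the harmonic map equation as a first-order system $\partial_{\bar z}f_z = A\,f_z$ with bounded coefficients and invoking the Hartman--Wintner lemma or the similarity principle to get finite-order vanishing and the leading-term expansion, so a complete proof would have to supply that argument (also note a constant map must be excluded or treated separately, since it is weakly conformal with $\mu\equiv 0$ but not a branched immersion). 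Second, a minor slip: the Hopf differential $\phi\,dz^2$ is a section of the square of the canonical bundle $K^{\otimes 2}=(T^{*(1,0)}S^2)^{\otimes 2}$, not of $(T^{1,0}S^2)^{\otimes 2}$; your later appeal to $H^0(S^2,K^{\otimes 2})=0$ and the $w=1/z$ chart computation are the correct versions of this point.
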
 

Lemma \ref{pluri_characterization_lemma} implies an extension of Theorem \ref{harmonic_conf_thm} to pluriharmonic maps of higher-dimensional complex projective spaces, which we record in Lemma \ref{pluri_holom_prop}.  Later we quote a result of Ohnita, in Theorem \ref{bochner_result}, which subsumes this result, but we present a short proof of Lemma \ref{pluri_holom_prop} which introduces a definition and notation used in the proofs of Theorems \ref{infimum} and \ref{cpn_p_energy_thm}: 

\begin{lemma}
\label{pluri_holom_prop}
Let $F:(\C P^{N},g_{0}) \rightarrow (M,g)$ be a pluriharmonic map from complex projective space to a Riemannian manifold.  Then $F^{*}g$ is a Hermitian bilinear form on $\C P^{N}$; that is:  

\begin{equation}
\displaystyle F^{*}g(JV,JW) = F^{*}g(V,W). 
\end{equation}
\end{lemma}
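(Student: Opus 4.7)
The plan is to reduce the Hermitian identity to a pointwise statement and then invoke Theorem \ref{harmonic_conf_thm} on the restriction of $F$ to complex projective lines. Since $F^{*}g$ is a symmetric bilinear form, I will first observe that by polarization it suffices to verify the diagonal case $F^{*}g(JV,JV) = F^{*}g(V,V)$ for every tangent vector $V$: applying the diagonal identity to $V+W$ and expanding by bilinearity and symmetry then yields the full Hermitian identity $F^{*}g(JV,JW) = F^{*}g(V,W)$.

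To establish the diagonal case at a point $x \in \C P^{N}$ and a nonzero vector $V \in T_{x}\C P^{N}$, I would consider the unique complex projective line $G : \C P^{1} \hookrightarrow \C P^{N}$ passing through $x$ whose tangent plane at $x$ is the real span of $\lbrace V, JV \rbrace$. By the implication i.) $\Rightarrow$ iii.) in Lemma \ref{pluri_characterization_lemma}, the restriction $F \circ G : (S^{2}, g_{0}) \rightarrow (M,g)$ is a harmonic map from a $2$-sphere. Theorem \ref{harmonic_conf_thm} then gives $(F \circ G)^{*}g = \mu\, g_{0}$ for some smooth nonnegative function $\mu$ on $\C P^{1}$, which is a Hermitian bilinear form at every point since $g_{0}$ on $\C P^{1}$ is Hermitian.

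Choosing $e \in T_{y}\C P^{1}$ at the preimage $y$ of $x$ with $G_{*}e = V$, the holomorphicity of $G$ gives $G_{*}(Je) = JV$, so that
$$F^{*}g(V,V) = (F \circ G)^{*}g(e,e) = (F \circ G)^{*}g(Je,Je) = F^{*}g(JV,JV),$$
completing the diagonal case and hence, by the polarization step, the lemma. I do not anticipate a serious obstacle here: the content of the argument is essentially the observation that pluriharmonicity forces the restriction of $F$ to every complex projective line to be weakly conformal, and this, together with the polarization reduction, yields the Hermitian identity directly.
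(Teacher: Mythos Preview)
Your proof is correct and follows essentially the same approach as the paper: restrict $F$ to the unique degree-$1$ curve tangent to a given vector, use Lemma~\ref{pluri_characterization_lemma} to conclude the restriction is harmonic, apply Theorem~\ref{harmonic_conf_thm} to obtain $|dF(V)| = |dF(JV)|$, and then invoke polarization. The only difference is that you spell out the polarization step and the holomorphicity of the inclusion $G$ a bit more explicitly than the paper does.
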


\begin{proof} By Lemma \ref{pluri_characterization_lemma}, for all degree-$1$ curves $\C P^{1} \subseteq \C P^{N}$, the map $F|_{\C P^{1}}$ is harmonic.  Every unit tangent vector $\vec{u}$ to $\C P^{N}$ is tangent to a unique such curve, which we will denote $\mathsf{T}(\vec{u})$, and to which $J\vec{u}$ is also tangent.  By Theorem \ref{harmonic_conf_thm} as applied to $F|_{\mathsf{T}(\vec{u})}$, we have $|dF(\vec{u})| = |dF(J\vec{u})|$.  The polarization identity then implies the bilinear form $F^{*}g$ is Hermitian. \end{proof} 

In fact, we have: 

\begin{theorem}{\em (Ohnita \cite[Proposition 4.2]{Oh1})}
\label{bochner_result}
Let $(X,h)$ be a closed K\"ahler-Einstein manifold with positive Ricci curvature and $F:(X,h) \rightarrow (M,g)$ a pluriharmonic map to a Riemannian manifold.  Then $F^{*}g$ is a Hermitian bilinear form on $(X,h)$, as in Lemma \ref{pluri_holom_prop}.  Moreover, the $2$-form $\omega^{*}$ on $X$ given by $\omega^{*}(V,W) = F^{*}g(JV,W)$ is closed. 
\end{theorem}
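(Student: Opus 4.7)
The plan is to treat the Hermitian property of $F^{*}g$ and the closedness of $\omega^{*}$ as two largely independent statements. The first requires the positive K\"ahler-Einstein hypothesis and goes through a Bochner vanishing, while the second is a purely local computation using pluriharmonicity and requires no curvature assumption on $X$.

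For the Hermitian property, decompose the complexified $F^{*}g$ according to bi-type relative to $J$, writing it as its $(2,0) + (1,1) + (0,2)$ parts; the condition $F^{*}g(JV,JW) = F^{*}g(V,W)$ is equivalent to the vanishing of the $(2,0)$-component
\begin{equation*}
\phi^{(2,0)}(V,W) := g(\partial F(V),\partial F(W)), \quad V,W \in T^{1,0}X.
\end{equation*}
The first step is to show that $\phi^{(2,0)}$ is a holomorphic section of $S^{2}(T^{*1,0}X)$. The pluriharmonicity condition $\alpha_{F}(JX,JY) = -\alpha_{F}(X,Y)$ complexifies to $\alpha_{F}^{(1,1)} = 0$, which in turn yields $\nabla^{F}_{\bar Z}\partial F(V) = \partial F(\nabla_{\bar Z}V)$ for $\bar Z \in T^{0,1}X$ and $V \in T^{1,0}X$; substituting this into the expansion
\begin{equation*}
(\bar\nabla\phi^{(2,0)})(\bar Z; V,W) = \bar Z\cdot\phi^{(2,0)}(V,W) - \phi^{(2,0)}(\nabla_{\bar Z}V,W) - \phi^{(2,0)}(V,\nabla_{\bar Z}W)
\end{equation*}
causes all four terms to cancel. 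Since both the Hermitian condition on $F^{*}g$ and, by Lemma \ref{pluri_characterization_lemma}, pluriharmonicity of $F$ depend only on the complex structure of $X$ (the latter because restriction of any K\"ahler metric to a complex curve determines a metric in the conformal class fixed by $J$, and harmonicity of maps from surfaces is conformally invariant), we may replace $h$ by the given K\"ahler-Einstein metric of positive Ricci curvature. On this metric, the standard Bochner--Kodaira--Weitzenb\"ock formula for a holomorphic section of $S^{2}(T^{*1,0}X)$ produces a curvature term that is pointwise strictly positive under the positive Ricci hypothesis, forcing $\phi^{(2,0)} \equiv 0$.

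For the closedness of $\omega^{*}$, work in local holomorphic coordinates $z^{i}$ on $X$, set $h_{i\bar j} := g(\partial_{i}F,\partial_{\bar j}F)$, and note that, the $(2,0)$ and $(0,2)$ parts of $F^{*}g$ having just been shown to vanish, $\omega^{*} = i\, h_{i\bar j}\, dz^{i} \wedge d\bar z^{j}$. Closedness then reduces to the symmetry $\partial_{\bar k}h_{i\bar j} = \partial_{\bar j}h_{i\bar k}$. Differentiating,
\begin{equation*}
\partial_{\bar k}h_{i\bar j} = g(\nabla^{F}_{\bar k}\partial_{i}F,\partial_{\bar j}F) + g(\partial_{i}F,\nabla^{F}_{\bar k}\partial_{\bar j}F),
\end{equation*}
and applying pluriharmonicity (which, for a K\"ahler metric, reads $\nabla^{F}_{\bar k}\partial_{i}F = 0$ in holomorphic coordinates, since the Christoffel symbols with mixed holomorphic/antiholomorphic indices vanish) kills the first term. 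Subtracting the analogous expression with $j$ and $k$ swapped,
\begin{equation*}
\partial_{\bar k}h_{i\bar j} - \partial_{\bar j}h_{i\bar k} = g\bigl(\partial_{i}F, \nabla^{F}_{\bar k}\partial_{\bar j}F - \nabla^{F}_{\bar j}\partial_{\bar k}F\bigr) = g\bigl(\partial_{i}F, F_{*}[\partial_{\bar k},\partial_{\bar j}]\bigr) = 0,
\end{equation*}
using symmetry of $\alpha_{F}$ to cancel its contributions together with torsion-freeness of the Levi-Civita connection. Thus $\bar\partial\omega^{*} = 0$ and, by complex conjugation, $\partial\omega^{*} = 0$, so $d\omega^{*} = 0$.

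The principal obstacle is the Bochner vanishing step: one must verify that the Weitzenb\"ock curvature operator on $S^{2}(T^{*1,0}X)$ is pointwise strictly positive under the positive Ricci assumption (rather than merely nonnegative), and one must justify, via the conformal invariance of pluriharmonicity, the passage from the given K\"ahler metric $h$ to the K\"ahler-Einstein metric on which the Bochner argument operates. Once this vanishing is established, the closedness of $\omega^{*}$ is a purely algebraic consequence of pluriharmonicity, requiring no curvature hypothesis on $X$ whatsoever.
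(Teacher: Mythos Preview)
The paper does not supply its own proof of this theorem; it is stated with attribution to Ohnita \cite[Proposition~4.2]{Oh1} and \cite[Lemmas~4 and~5]{BBdBR1} and then used as a black box. Your argument is correct and is essentially the one found in those references: the $(2,0)$-part $\phi^{(2,0)}$ of $F^{*}g$ is shown to be a holomorphic section of $S^{2}(T^{*1,0}X)$ via the vanishing of $\alpha_F^{(1,1)}$, and then a Bochner argument on the K\"ahler--Einstein metric forces $\phi^{(2,0)}\equiv 0$; the closedness of $\omega^{*}$ is the straightforward local computation you give. Your observation that both pluriharmonicity (via Lemma~\ref{pluri_characterization_lemma}.iii and conformal invariance of surface harmonicity) and the Hermitian condition depend only on the complex structure of $X$, so that one may freely pass to the K\"ahler--Einstein metric, is exactly the point needed to make the Bochner step legitimate.

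One minor remark: in your final paragraph you flag the strict positivity of the Weitzenb\"ock curvature term as the ``principal obstacle.'' For $E=S^{2}(T^{*1,0}X)$ the curvature endomorphism $i\Lambda F_E$ acts as a sum of two copies of $-\mathrm{Ric}$ (one on each symmetric slot), so under $\mathrm{Ric}>0$ it is indeed strictly negative and the vanishing follows; this is the standard Kobayashi vanishing for holomorphic covariant tensors on Fano manifolds, and no further subtlety arises.
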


We finish this section by presenting a new proof of Theorem \ref{ohnita_theorem}.  Like Ohnita's proof of Theorem \ref{ohnita_theorem} in \cite{Oh1}, and related work of Burns, Burstall, de Bartolomeis and Rawnsley in \cite{BBdBR1} which we discuss below, our proof is based on work of Lawson and Simons in \cite{LS1} and has the following outline: we first show that if $F$ is a stable harmonic map of complex projective space, the kernel of its second variation operator contains all gradients of first eigenfunctions---this is a special case of Proposition \ref{symmetric_space_prop} below.  We then show in Proposition \ref{cpn_pluriharmonic_prop} that harmonic maps of complex projective space with this property are pluriharmonic.  However we give new arguments for both steps in this proof, based on the result of Lemma \ref{Jacobi_lemma} below.  Lemma \ref{Jacobi_lemma} follows from a special case of the second variation formula for harmonic maps that appears in Ohnita's work:

\begin{lemma}{\em (Ohnita \cite[p.563--564]{Oh1})}    
\label{Ohnita_lemma}
Let $(N,h)$ and $(M,g)$ be Riemannian manifolds, with $(N,h)$ compact, and let $F:(N,h) \rightarrow (M,g)$ be a harmonic map.  Let $\mathcal{J}_{F}$ be the Jacobi operator of $F$; that is, for a variation $F_{t}$ of $F = F_{0}$ with $\frac{\partial F}{\partial t}|_{t=0} = \mathcal{W}$, 

\begin{equation}
\label{second_var_eqn}
\displaystyle \frac{d^{2}}{dt^{2}}(E_{2}(F_{t}))|_{t=0} = \int\limits_{N} g(\mathcal{J}_{F}(\mathcal{W}),\mathcal{W}) \ dVol_{N}. 
\end{equation}

Then for a vector field $V$ on $N$, letting $Tr(\alpha_{F} \circ \nabla^{h}V) = \sum\limits_{i=1}^{n} \alpha_{F}(\nabla^{h}_{e_{i}}V,e_{i})$, where $e_{1}, \dots, e_{n}$ is an orthonormal frame for $TN$, 

\begin{equation}
\label{Ohnita_lemma_eqn}
\displaystyle \mathcal{J}_{F}(F_{*}V) = -F_{*}\left(Tr(\nabla^{h}\nabla^{h}V) + Ric^{N}(V)\right) - 2 Tr(\alpha_{F} \circ \nabla^{h}V). 
\end{equation}
\end{lemma} 

For harmonic maps of K\"ahler manifolds, Lemma \ref{Ohnita_lemma} implies: 

\begin{lemma}
\label{Jacobi_lemma}
Let $(X,h)$ be a compact K\"ahler manifold and $F:(X,h) \rightarrow (M,g)$ a harmonic map to a Riemannian manifold.  Let $\mathcal{J}_{F}$ be the Jacobi operator of $F$ as in Lemma \ref{Ohnita_lemma}, and let $Tr(\alpha_{F}\circ\nabla^{h}V)$ be as in Lemma \ref{Ohnita_lemma}.  If $V$ is a holomorphic vector field on $X$, 

\begin{equation}
\label{Jacobi_lemma_eqn}
\displaystyle \mathcal{J}_{F}(F_{*}V) = -2 Tr(\alpha_{F}\circ\nabla^{h}V). 
\end{equation}
\end{lemma}

\begin{proof} Let $R(Y,W)U = \nabla^{h}_{Y}\nabla^{h}_{W}U - \nabla^{h}_{W}\nabla^{h}_{Y}U - \nabla^{h}_{[Y,W]}U$ be the curvature tensor of $(X,h)$ and $V$ a holomorphic vector field on $(X,h)$.  By extending a unit tangent vector $e \in T_{x}X$ to a locally defined holomorphic vector field $E$, we have:  
	
\begin{equation*}
\displaystyle \nabla^{h}\nabla^{h}V(e,e) + \nabla^{h}\nabla^{h}V(Je,Je) + R(V,e)e + R(V,Je)Je \smallskip
\end{equation*}
\begin{equation}
\displaystyle = \nabla^{h}_{E}[E,V] + \nabla^{h}_{JE}[JE,V] - \nabla^{h}_{[V,E]}E - \nabla^{h}_{[V,JE]}JE = 0, \smallskip 
\end{equation}
which implies $Tr(\nabla^{h}\nabla^{h}V) + Ric(V) = 0$.  By Lemma \ref{Ohnita_lemma} this implies the result. \end{proof} 

Lemma \ref{Jacobi_lemma} implies a property of harmonic maps of Hermitian symmetric spaces which also follows from Ohnita's work in \cite[Section 2]{Oh1}, and from work of Burns, Burstall, de Barolomeis and Rawnsley \cite[Lemma 2.3]{BBdBR1}:   

\begin{proposition}
\label{symmetric_space_prop}
Let $F:(Z_{0},h_{0}) \rightarrow (M,g)$ be a harmonic map from a compact Hermitian symmetric space to a Riemannian manifold, $\mathcal{J}_{F}$ the Jacobi operator of $F$ as in Lemma \ref{Jacobi_lemma}, and $\mathfrak{g}$ the Lie algebra of Killing vector fields on $(Z_{0},h_{0})$.  For $V \in \mathfrak{g}$, let $\widetilde{V}$ be the holomorphic vector field $JV$ on $(Z_{0},h_{0})$, and let $II$ be the bilinear form on $\mathfrak{g}$ given by:  

\begin{equation}
\label{sym_space_prop_eqn}
\displaystyle II(V,W) = \int\limits_{Z_{0}} g(\mathcal{J}_{F}(F_{*}\widetilde{V}),F_{*}\widetilde{W}) dVol_{h_{0}}. 
\end{equation} 

Then $Tr(II) = 0$.  If $F$ is stable, then for any $V \in \mathfrak{g}$ we have $F_{*}\widetilde{V} \in \ker(\mathcal{J}_{F})$. 
\end{proposition}

\begin{proof} Letting $V_{1}, \dots, V_{r}$ be a basis for $\mathfrak{g}$ which is orthonormal relative to the inner product given by the negative of the Killing form, and letting $\widetilde{V}_{i} = JV_{i}$ as above, 
	
\begin{equation}
\label{sym_space_pf_eqn_1}
\displaystyle Tr(II) = \int\limits_{Z_{0}} \sum\limits_{i = 1}^{r} g(\mathcal{J}_{F}(F_{*}\widetilde{V}_{i}),F_{*}\widetilde{V}_{i}) dVol_{h_{0}}. 
\end{equation}

The pointwise value of the integrand $\sum_{i = 1}^{r} g(\mathcal{J}_{F}(F_{*}\widetilde{V}_{i}),F_{*}\widetilde{V}_{i})$ in (\ref{sym_space_pf_eqn_1}) is independent of the orthonormal basis $V_{i}$ for $\mathfrak{g}$.  For each $x \in Z_{0}$ there is an orthogonal decomposition $\mathfrak{p}_{x} \oplus \mathfrak{k}_{x}$ of $\mathfrak{g}$, where $\mathfrak{p}_{x}$ is the space of $V \in \mathfrak{g}$ with $\nabla^{h_{0}}V = 0$ at $x$ and $\mathfrak{k}_{x}$ is the space of $V \in \mathfrak{g}$ with $V = 0$ at $x$.  Choosing orthonormal bases $V_{1}, \dots, V_{n}$ for $\mathfrak{p}_{x}$ and $V_{n+1}, \dots, V_{r}$ for $\mathfrak{k}_{x}$ gives an orthonormal basis $V_{1}, \dots, V_{r}$ for $\mathfrak{g}$ for which, by Lemma \ref{Jacobi_lemma}, we have:  

\begin{equation}
\label{sym_space_pf_eqn_2}	
\displaystyle g(\mathcal{J}_{F}(F_{*}\widetilde{V}_{i}),F_{*}\widetilde{V}_{i}) = -2 g(Tr(\alpha_{F} \circ \nabla^{h_{0}}\widetilde{V}_{i}),F_{*}\widetilde{V}_{i}) = 0, \smallskip 
\end{equation}
because $\nabla^{h_{0}}\widetilde{V}_{i} = 0$ for $i = 1,\dots,n$ and $F_{*}\widetilde{V}_{i} = 0$ for $i = n+1,\dots,r$ at $x$.  The integrand in (\ref{sym_space_pf_eqn_1}) therefore vanishes identically and $Tr(II) = 0$. \\ 

If $F$ is stable, let $0 = \lambda_{0} < \lambda_{1} < \cdots < \lambda_{j} < \cdots$ be the distinct eigenvalues of $\mathcal{J}_{F}$ acting on sections of $F^{*}TM$, let $V_{1}, \dots, V_{r}$ be an orthonormal basis for $\mathfrak{g}$, and let $\widetilde{\mathcal{V}}_{i}^{j}$ the component of $F_{*}\widetilde{V}_{i}$ belonging to the $j^{th}$ eigenspace of $\mathcal{J}_{F}$.  We then have:   

\begin{equation}
\displaystyle 0 = Tr(II) = \sum\limits_{i=1}^{r} \int\limits_{Z_{0}} g(\mathcal{J}_{F}(F_{*}\widetilde{V}_{i}),F_{*}\widetilde{V}_{i}) = \sum\limits_{j = 0}^{\infty} \lambda_{j} \left( \sum\limits_{i = 1}^{r} \int\limits_{Z_{0}} |\widetilde{\mathcal{V}}_{i}^{j}|^{2} dVol_{h_{0}} \right), 
\end{equation} 
which implies that $\widetilde{\mathcal{V}}_{i}^{j} = 0$ for all $j \geq 1$ and $i=1,\dots,r$, and therefore that $F_{*}\widetilde{V}_{i}$ is in the $0$-eigenspace of $\mathcal{J}_{F}$. \end{proof}

Proposition \ref{symmetric_space_prop} and another application of Lemma \ref{Jacobi_lemma} imply the result of Theorem \ref{ohnita_theorem}: 

\begin{proposition}
\label{cpn_pluriharmonic_prop}

Let $F:(\C P^{N},g_{0}) \rightarrow (M,g)$ be a harmonic map from complex projective space to a Riemannian manifold and $\mathcal{J}_{F}$ its Jacobi operator.  Suppose that for all Killing vector fields $V$ on $(\C P^{N},g_{0})$, letting $\widetilde{V} = JV$, we have $F_{*}\widetilde{V} \in \ker(\mathcal{J}_{F})$.  Then $F$ is pluriharmonic.  In particular, stable harmonic maps of $(\C P^{N},g_{0})$ are pluriharmonic. 
\end{proposition}

\begin{proof} By Lemma \ref{Jacobi_lemma}, for all Killing vector fields $V$ on $(\C P^{N},g_{0})$, letting $\widetilde{V} = JV$, and letting $Tr(\alpha_{F}\circ\nabla^{g_{0}}\widetilde{V})$ be as in Lemma \ref{Ohnita_lemma}, we have:  

\begin{equation}
\label{cpn_pluri_pf_eqn_1}
\displaystyle -2 Tr(\alpha_{F}\circ\nabla^{g_{0}}\widetilde{V}) = \mathcal{J}_{F}(F_{*}\widetilde{V}) = 0. \smallskip 
\end{equation} 

At $x \in \C P^{N}$, the Lie algebra $\mathfrak{k}_{x}$ of Killing vector fields which vanish at $x$ is isomorphic to $\mathfrak{u}(N)$, and the identification $V \mapsto \nabla^{g_{0}}V$ gives an isomorphism of $\mathfrak{k}_{x}$ with the algebra of skew-Hermitian linear transformations of $T_{x}\C P^{N}$.  For any unit tangent vector $e$ to $\C P^{N}$ at $x$, there is therefore a Killing vector field $V \in \mathfrak{k}_{x}$ with $\nabla^{g_{0}}_{e}V = Je$, $\nabla^{g_{0}}_{Je}V = -e$ and $\nabla^{g_{0}}_{e'}V = 0$ for $e'$ orthogonal to $\lbrace e, Je \rbrace$.  By (\ref{cpn_pluri_pf_eqn_1}), 

\begin{equation*}
\displaystyle 0 = Tr(\alpha_{F}\circ\nabla^{g_{0}}\widetilde{V}) = \alpha_{F}(\nabla^{g_{0}}_{e}\widetilde{V},e) + \alpha_{F}(\nabla^{g_{0}}_{Je}\widetilde{V},Je) \smallskip 
\end{equation*} 
\begin{equation}
\displaystyle = -\left[ \alpha_{F}(e,e) + \alpha_{F}(Je,Je) \right], \smallskip 
\end{equation}
so that $\alpha_{F}(Je,Je) = -\alpha_{F}(e,e)$.  By the polarization identity, the bilinear form $\alpha_{F}$ then satisfies $\alpha_{F}(JV,JW) = -\alpha_{F}(V,W)$ and $F$ is pluriharmonic.  Proposition \ref{symmetric_space_prop} implies this is the case for stable harmonic maps of $(\C P^{N},g_{0})$. \end{proof} 


\section{Lower Bounds for Energy Functionals of Mappings}  
\label{lower_bounds} 


In this section, we prove Theorems \ref{cpn_p_energy_thm} and \ref{rpn_p_energy_thm}, and we establish more properties of pluriharmonic maps of complex projective space, in Proposition \ref{holomorphic_corollary} and Lemmas \ref{immersion_lemma} and \ref{pluri_homothety_lemma}.  The starting point for the proofs of these results is a formula for the energy of a map due to Croke, which we quote in Lemma \ref{chris_lemma}.  If $F:(N^{n},h) \rightarrow (M^{m},g)$ is a Lipschitz map of Riemannian manifolds and $x \in N$ a point at which $F$ is differentiable, $F^{*}g$ is a positive semidefinite symmetric bilinear form on $T_{x}N$ which can be diagonalized relative to $h$.  Letting $e_{1}, e_{2}, \cdots, e_{n}$ be an orthonormal basis for $T_{x}N$ (relative to $h$) of eigenvectors for $F^{*}g$, we have: 

\begin{equation}
\label{energy_frame_eqn}
\displaystyle |dF_{x}|^{2} = \sum\limits_{i=1}^{n} |dF(e_{i})|^{2}. 
\end{equation}

Note that if $F$ is a pluriharmonic map of $(\C P^{N},g_{0})$, or any closed K\"ahler-Einstein manifold with positive Ricci curvature, Theorem \ref{bochner_result} implies $F^{*}g$ can be diagonalized as in (\ref{energy_frame_eqn}) by a unitary basis $e_{1}, e_{2} = Je_{1}, \cdots, e_{2N} = Je_{2N-1}$, with $|dF(e_{i})| = |dF(Je_{i})|$.  For any map $F$ of Riemannian manifolds (\ref{energy_frame_eqn}) implies that, letting $U_{x}(N,h)$ be the unit tangent fibre of $(N,h)$, 

\begin{equation}
\label{norm_squared_formula}
\displaystyle |dF_x|^{2} = \frac{n}{\sigma(n-1)} \int\limits_{U_{x}(N,h)} |dF(\vec{u})|^{2} d\vec{u}. 
\end{equation}

Integrating the identity in (\ref{norm_squared_formula}) over $x \in (N,h)$ gives: 

\begin{lemma}{\em (Croke \cite{Cr1})}
\label{chris_lemma} 
For a Lipschitz map $F:(N^{n},h) \rightarrow (M^{m},g)$ of Riemannian manifolds, with $U(N,h)$ the unit tangent bundle of $(N,h)$, 

\begin{equation}
\displaystyle E_{2}(F) = \frac{n}{2 \sigma(n-1)} \int\limits_{U(N,h)} |dF(\vec{u})|^{2} d\vec{u}. 
\end{equation}
\end{lemma}

In the proofs of Theorems \ref{infimum}, \ref{cpn_p_energy_thm} and \ref{rpn_p_energy_thm}, we work with the following measure spaces associated to the canonical Riemannian metrics on real and complex projective space:  

\begin{definition}
\label{measure_space}
\begin{flushleft}
{\em \textbf{A.}} Let $\mathcal{G}$ be the space of oriented geodesics $\gamma$ in $(\R P^{n},g_{0})$; that is, the quotient of the unit tangent bundle $U(\R P^{n},g_{0})$ by the geodesic flow.  Let $\zeta : U(\R P^{n},g_{0}) \rightarrow \mathcal{G}$ be the quotient map, $dVol_{U}$ the canonical measure on $U(\R P^{n},g_{0})$, and $d\gamma$ the measure $\frac{1}{\pi} \zeta_{\#}dVol_{U}$ on $\mathcal{G}$. \smallskip 

{\em \textbf{B.}} Let $\mathcal{L}$ be the space of linearly embedded $\C P^{1} \subseteq \C P^{N}$; that is, the quotient of the unit tangent bundle $U(\C P^{N},g_{0})$ by the map $\mathsf{T}$ which sends $\vec{u} \in U(\C P^{N},g_{0})$ to the unique degree-$1$ curve $\mathsf{T}(\vec{u}) \cong \C P^{1}$ to which $\vec{u}$ is tangent, as in the proof of Lemma \ref{pluri_holom_prop}.  Let $dVol_{U}$ be the canonical measure on $U(\C P^{N},g_{0})$ and $d\mathcal{P}$ the measure $\frac{1}{2\pi^{2}}\mathsf{T}_{\#}dVol_{U}$ on $\mathcal{L}$. 
\end{flushleft}
\end{definition}

The total volumes of $\mathcal{G}$ and $\mathcal{L}$ in the measures $d\gamma$ and $d\mathcal{P}$ in Definition \ref{measure_space} are $\frac{\sigma(n)\sigma(n-1)}{2\pi}$ and $\frac{\pi^{2N-2}}{N!(N-1)!}$ respectively.  These normalizations are chosen so that, for example, if $\eta$ is an integrable function on $U(\R P^{n},g_{0})$, then by Fubini, 

\begin{equation}
\label{examp_calc}
\displaystyle \int\limits_{U(\R P^{n},g_{0})} \eta(\vec{u}) \ d\vec{u} = \int\limits_{\mathcal{G}} \int\limits_{\gamma} \eta(\gamma'(t)) dt d\gamma. 
\end{equation}

Lemma \ref{chris_lemma}, together with Fubini's theorem as in (\ref{examp_calc}), gives a result about maps of complex projective space due to Croke.  For completeness, we include the proof of this theorem: 

\begin{theorem}{\em (Croke \cite[Theorem 3]{Cr1})}
\label{cpn_energy_formula_lemma}
Let $F:(\C P^{N},g_{0}) \rightarrow (M,g)$ be a Lipschitz map from complex projective space to a Riemannian manifold, let $\mathcal{L}$ and $d\mathcal{P}$ be as in Definition \ref{measure_space}.B, and for $\mathcal{P} \in \mathcal{L}$, let $F|_{\mathcal{P}}$ be the map given by composing the inclusion $\mathcal{P} \subseteq \C P^{N}$ with $F$.  Then: 

\begin{equation}
\label{cpn_energy_formula_prop_eqn}
\displaystyle E_{2}(F) = \frac{N!}{\pi^{N-1}} \int\limits_{\mathcal{L}} E_{2}(F|_{\mathcal{P}}) d\mathcal{P}.  
\end{equation}
\end{theorem}

\begin{proof} For $\mathcal{P} \in \mathcal{L}$, let $U(\mathcal{P},g_{0})$ be the unit tangent bundle of $\mathcal{P}$ in the metric $g_{0}|_{\mathcal{P}}$.  Lemma \ref{chris_lemma} and Fubini's theorem as in (\ref{examp_calc}) imply: 
	
\begin{equation}
\label{cpn_energy_lemma_pf_eqn}
\displaystyle E_{2}(F) = \frac{N}{\sigma(2N-1)} \int\limits_{\mathcal{L}} \int\limits_{U(\mathcal{P},g_{0})} |dF(\vec{u})|^{2} \ d\vec{u} \ d\mathcal{P}. 
\end{equation}

For all $\mathcal{P} \in \mathcal{L}$, the energy of $F|_{\mathcal{P}}$ can be calculated via Lemma \ref{chris_lemma}, which gives the result. \end{proof} 

For pluriharmonic maps of complex projective space, we then have: 

\begin{proposition}
\label{holomorphic_corollary}
Let $F:(\C P^{N},g_{0}) \rightarrow (M,g)$ be a pluriharmonic map to a Riemannian manifold, and let $\mathcal{L}$ be as in Definition \ref{measure_space}.B.  Then there is a constant $\mathcal{A}_{*}$ such that for all $\mathcal{P} \in \mathcal{L}$, we have $E_{2}(F|_{\mathcal{P}}) = |F(\mathcal{P})| = \mathcal{A}_{*}$, and, letting $C_{N}$ be as in Theorem \ref{infimum}, we have $E_{2}(F) = C_{N} \mathcal{A}_{*}$.  In particular, if $F$ is a holomorphic or antiholomorphic map to a compact K\"ahler manifold, we have $E_{2}(F) = C_{N} A^{\star}$, where $A^{\star}$ is as in Theorem \ref{infimum}. 
\end{proposition}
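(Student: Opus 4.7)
The plan is to prove the three claims in sequence: first that $E_{2}(F|_{\mathcal{P}}) = |F(\mathcal{P})|$ for each $\mathcal{P} \in \mathcal{L}$, second that this common value is independent of $\mathcal{P}$, and third that the integration formula of Lemma \ref{cpn_energy_formula_lemma} then yields the constant $C_{N}$.

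First I would argue that each restriction $F|_{\mathcal{P}}$ is a harmonic, conformal branched immersion of $(S^{2},g_{0})$ into $(M,g)$. By Lemma \ref{pluri_characterization_lemma}.iii, since $\mathcal{P} \subseteq \C P^{N}$ is a germ of a complex curve and $F$ is pluriharmonic, $F|_{\mathcal{P}}$ is harmonic. Theorem \ref{harmonic_conf_thm} then implies that $F|_{\mathcal{P}}$ is a conformal branched immersion, so $F^{*}g|_{\mathcal{P}} = \mu_{\mathcal{P}}(x) g_{0}|_{\mathcal{P}}$. For a conformal branched immersion of a surface, pointwise $|dF_{x}|^{2} = 2\mu_{\mathcal{P}}(x) = 2\sqrt{\det(dF^{T}\circ dF)}$, so $E_{2}(F|_{\mathcal{P}}) = |F(\mathcal{P})|$, where $|F(\mathcal{P})|$ denotes the area of $F(\mathcal{P})$ counted with multiplicity.

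Next I would show that $|F(\mathcal{P})|$ is independent of $\mathcal{P}$ by invoking Theorem \ref{bochner_result}. Since $(\C P^{N},g_{0})$ is K\"ahler-Einstein with positive Ricci curvature, Theorem \ref{bochner_result} gives that $F^{*}g$ is a Hermitian bilinear form and the associated $2$-form $\omega^{*}(V,W) = F^{*}g(JV,W)$ is closed on $\C P^{N}$. On a complex curve $\mathcal{P}$ with tangent vector $V$, one computes $\omega^{*}|_{\mathcal{P}}(V,JV) = F^{*}g(JV,JV) = \mu_{\mathcal{P}}|V|^{2}$, so $\omega^{*}|_{\mathcal{P}}$ is precisely the area form of $\mathcal{P}$ in the pulled-back metric, giving $|F(\mathcal{P})| = \int_{\mathcal{P}} \omega^{*}$. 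Since any two elements of $\mathcal{L}$ represent the same generator of $H_{2}(\C P^{N};\Z)$, Stokes' theorem yields a common value $\mathcal{A}$ of $|F(\mathcal{P})|$ for all $\mathcal{P} \in \mathcal{L}$.

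Substituting $E_{2}(F|_{\mathcal{P}}) = \mathcal{A}$ into Lemma \ref{cpn_energy_formula_lemma} and using that the total measure of $\mathcal{L}$ in $d\mathcal{P}$ is $\frac{\pi^{2N-2}}{N!(N-1)!}$ gives $E_{2}(F) = \frac{N!}{\pi^{N-1}} \cdot \mathcal{A} \cdot \frac{\pi^{2N-2}}{N!(N-1)!} = C_{N}\mathcal{A}$. Finally, for the holomorphic or antiholomorphic case I would appeal to Wirtinger's inequality (equivalently, Lichnerowicz's result cited after Theorem \ref{infimum}, or the fact that holomorphic curves are area-minimizing in their homology class of compact K\"ahler manifolds) to conclude $|F(\mathcal{P})| = A^{\star}$, since $F|_{\mathcal{P}}$ represents the homotopy class $\varphi$ from Theorem \ref{infimum}. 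The only non-routine step is the constancy of $|F(\mathcal{P})|$, and invoking the closedness of $\omega^{*}$ from Theorem \ref{bochner_result} avoids having to argue group-theoretically via the action of $PU(N+1)$ on $\mathcal{L}$, which would not directly give equality of the restrictions.
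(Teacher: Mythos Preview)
Your proof is correct. The main substantive difference from the paper's argument lies in how you establish the constancy of $E_{2}(F|_{\mathcal{P}})$ over $\mathcal{L}$. The paper does in fact argue via the transitive action of the isometry group: it connects any two lines $\mathcal{P}_{0},\mathcal{P}_{1}\in\mathcal{L}$ by a one-parameter family $\mathcal{P}_{t}$ obtained from isometries of $(\C P^{N},g_{0})$, observes that each $F|_{\mathcal{P}_{t}}$ is harmonic by pluriharmonicity, and then invokes the first variation of energy to conclude that $t\mapsto E_{2}(F|_{\mathcal{P}_{t}})$ has vanishing derivative. So your closing remark is slightly off---the group-theoretic route does go through, once supplemented by the first variation formula. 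Your cohomological argument via the closed form $\omega^{*}$ from Theorem~\ref{bochner_result} is equally valid and arguably more conceptual, and it foreshadows the role $\omega^{*}$ plays in the equality analysis of Theorem~\ref{cpn_p_energy_thm}; on the other hand it imports a deeper input (the closedness of $\omega^{*}$ is a Bochner-type result), whereas the paper's approach is more self-contained at this point, relying only on Lemma~\ref{pluri_characterization_lemma} and Theorem~\ref{harmonic_conf_thm}. The holomorphic/antiholomorphic conclusion is handled the same way in both arguments, via calibration of complex curves in K\"ahler manifolds.
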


\begin{proof} Because $F$ is pluriharmonic, Lemma \ref{pluri_characterization_lemma} implies $F|_{\mathcal{P}}$ is harmonic for all $\mathcal{P}  \in \mathcal{L}$.  Theorem \ref{harmonic_conf_thm} then implies $F|_{\mathcal{P}}$ is conformal, so $E_{2}(F|_{\mathcal{P}})$ is equal to the area of its image.  To see that $E_{2}(F|_{\mathcal{P}})$ is the same for all $\mathcal{P} \in \mathcal{L}$, note that any two elements $\mathcal{P}_{0}$, $\mathcal{P}_{1}$ of $\mathcal{L}$ can be joined by a $1$-parameter family $\lbrace \mathcal{P}_{t} \rbrace_{0 \leq t \leq 1}$, by composing the inclusion $\mathcal{P}_{0} \subseteq \C P^{N}$ with a $1$-parameter family of isometries of $(\C P^{N},g_{0})$.  Because $F|_{\mathcal{P}_{t}}$ is harmonic for all $0 \leq t \leq 1$, the energy of $F|_{\mathcal{P}_{t}}$ is constant in $t$, so $E_{2}(F|_{\mathcal{P}_{0}}) = E_{2}(F|_{\mathcal{P}_{1}})$.  Theorem \ref{cpn_energy_formula_lemma} then implies $E_{2}(F) = C_{N}\mathcal{A}_{*}$, where $\mathcal{A}_{*}$ is the common value of $E_{2}(F|_{\mathcal{P}})$ for $\mathcal{P} \in \mathcal{L}$.  If $F$ is a holomorphic or antiholomorphic map to a compact K\"ahler manifold $(X,h)$, then $F(\mathcal{P})$ is a complex curve in $(X,h)$ for all $\mathcal{P} \in \mathcal{L}$ and therefore has minimal area in its homology class, so that $\mathcal{A}_{*} = A^{\star}$. \end{proof} 

Burns, Burstall, de Bartolomeis and Rawnsley showed in \cite[Lemma 6]{BBdBR1} that nonconstant pluriharmonic maps of compact simple Hermitian symmetric spaces are immersions on nonempty open subsets.  We have the following stronger statement for maps of complex projective space: 

\begin{lemma}
\label{immersion_lemma}
A nonconstant pluriharmonic map $F:(\C P^{N},g_{0}) \rightarrow (M,g)$ from complex projective space to a Riemannian manifold is an immersion on an open, dense subset $\Omega$ of $\C P^{N}$. 
\end{lemma}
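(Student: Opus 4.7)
The plan is to argue by contradiction, combining the closed semi-positive $(1,1)$-form given by Theorem \ref{bochner_result} with the branched-immersion structure of harmonic maps of $S^{2}$ from Theorem \ref{harmonic_conf_thm}. The set of immersion points is open, so it suffices to rule out the existence of a nonempty open set $U \subseteq \C P^{N}$ on which $F$ is nowhere an immersion.

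First, by Lemma \ref{pluri_holom_prop} the kernel $\ker dF_{x}$ is a $J$-invariant complex subspace, so non-immersion at $x$ means $\dim_{\C}\ker dF_{x} \geq 1$. By Theorem \ref{bochner_result} the real $(1,1)$-form $\omega^{*}(V,W) = F^{*}g(JV,W)$ is closed, and it is semi-positive since $\omega^{*}(v,Jv) = |dF(Jv)|^{2} \geq 0$; in particular $F$ is an immersion at $x$ if and only if $(\omega^{*})^{N}_{x} > 0$. Proposition \ref{holomorphic_corollary} and Theorem \ref{harmonic_conf_thm} give $\int_{\mathcal{P}}\omega^{*} = E_{2}(F|_{\mathcal{P}}) = \mathcal{A}$ for every $\mathcal{P} \in \mathcal{L}$, so $[\omega^{*}] = (\mathcal{A}/\pi)[\omega_{0}]$ in $H^{2}(\C P^{N},\R)$ and $\int_{\C P^{N}}(\omega^{*})^{N} = \mathcal{A}^{N} > 0$.

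Next, on a nonempty open subset $V \subseteq U$ where $F^{*}g$ has constant (locally minimal) rank $r < 2N$, the distribution $K = \ker F^{*}g$ is a smooth $J$-invariant complex distribution of rank $k \geq 1$. Identifying $K$ with the kernel of $\omega^{*}$ as a $2$-form and using $d\omega^{*}=0$, the Cartan identity $\iota_{[V,W]}\omega^{*} = \mathcal{L}_{V}\iota_{W}\omega^{*} - \iota_{W}(d\iota_{V} + \iota_{V}d)\omega^{*} = 0$ shows $K$ is involutive. Hence $V$ is foliated by complex submanifolds of complex dimension $k$ along which $F$ is locally constant.

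Each $F|_{\mathcal{P}}$ is a nonconstant conformal branched immersion of $S^{2}$ with only isolated branch points, so if one could find a $\mathcal{P} \in \mathcal{L}$ on which $F|_{\mathcal{P}}$ is constant on an open subset, unique continuation for harmonic maps would force $F|_{\mathcal{P}}$ to be identically constant, contradicting $\mathcal{A} > 0$. This is where I expect the main difficulty: a leaf of the $K$-foliation is a general complex submanifold and need not contain any linearly embedded $\C P^{1}$. The natural way to close the argument is to invoke that for pluriharmonic $F$ the holomorphic derivative $\partial F$ may be realized as a holomorphic section of a suitable holomorphic vector bundle on $\C P^{N}$, so the rank-drop loci $\{x : \operatorname{rank}_{\C}dF_{x} < N\}$ are analytic subvarieties; any such subvariety of the irreducible $\C P^{N}$ with nonempty interior must equal all of $\C P^{N}$, which would force $(\omega^{*})^{N} \equiv 0$ and contradict the positive integral above.
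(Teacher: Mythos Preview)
Your argument stalls at exactly the point you flag, and the fix you propose is not available in this generality. For $\partial F$ to be a holomorphic section of a holomorphic vector bundle you need the $(0,1)$-part of the pullback connection on $F^{*}TM \otimes_{\R} \C$ to square to zero, i.e.\ the $(0,2)$-component of the pullback curvature must vanish. When $(M,g)$ is K\"ahler this is automatic and underlies the arguments in \cite{BBdBR1}, but for an arbitrary Riemannian target there is no reason for $R^{M}\bigl(dF(\bar Z),dF(\bar W)\bigr)$ to vanish, and you have not supplied one. Without a genuine holomorphic structure the rank-drop loci need not be analytic, so the contradiction with $\int_{\C P^{N}}(\omega^{*})^{N} > 0$ does not close. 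The involutivity of $\ker\omega^{*}$ and the resulting local complex foliation are correct, but, as you yourself note, the leaves are not linear lines and therefore do not feed back into Theorem~\ref{harmonic_conf_thm} or unique continuation along a $\C P^{1}$.

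The paper's proof is both more elementary and complete, and it uses none of Theorem~\ref{bochner_result}. It is a direct rank-climbing argument. At a point $x$ with $\mathrm{rk}(dF_{x}) = 2k < 2N$, the kernel is $J$-invariant by Lemma~\ref{pluri_holom_prop}; pick any $\vec{u} \in \ker dF_{x}$ and look at the linear line $\mathsf{T}(\vec{u}) \in \mathcal{L}$ tangent to $\vec{u}$. By Lemma~\ref{pluri_characterization_lemma} and Proposition~\ref{holomorphic_corollary} the restriction $F|_{\mathsf{T}(\vec{u})}$ is a nonconstant harmonic map of $S^{2}$, hence by Theorem~\ref{harmonic_conf_thm} a conformal branched immersion with isolated branch points. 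Thus $x$ is an isolated branch point of this restriction, and at nearby $y \in \mathsf{T}(\vec{u})$ the restriction already has rank $2$, which pushes $\mathrm{rk}(dF_{y}) \geq 2k+2$. Iterating, every open neighbourhood of $x$ contains points of full rank. The entire argument stays within Lemma~\ref{pluri_characterization_lemma}, Theorem~\ref{harmonic_conf_thm}, and Proposition~\ref{holomorphic_corollary}, and never invokes the closed form $\omega^{*}$ or any holomorphic-bundle machinery.
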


\begin{proof} We have $rk(dF) = 2N$ on the subset $\Omega$ of $\C P^{N}$ where $\sqrt{\det(dF^{T} \circ dF)} \neq 0$, which is open.  To see that $\Omega$ is dense, suppose $x \in \C P^{N}$ is a point with $rk(dF_{x}) < 2N$.  By Theorem \ref{bochner_result}, $\ker(dF_{x})$ and $\ker(dF_{x})^{\perp}$ are complex subspaces of $T_{x}\C P^{N}$, so $rk(dF_{x}) = 2k = \dim(\ker(dF_{x})^{\perp})$.  Let $\vec{u} \in \ker(dF_{x})$, and let $\mathsf{T}(\vec{u}) \in \mathcal{L}$ be the degree-$1$ curve to which $\vec{u}$ is tangent, as in the proof of Lemma \ref{pluri_holom_prop}.  Lemma \ref{pluri_characterization_lemma} implies $F|_{\mathsf{T}(\vec{u})}$ is harmonic and Proposition \ref{holomorphic_corollary} implies it is nonconstant.  Theorem \ref{harmonic_conf_thm} then implies it is a conformal branched immersion.  The structure of $F|_{\mathsf{T}(\vec{u})}$ near $x$ is that of a branch point, and for $y \neq x$ in a sufficiently small neighborhood of $x$ in $\mathsf{T}(\vec{u})$, the rank of $dF|_{\mathsf{T}(\vec{u})}$ at $y$ is $2$ and the rank of $dF$ on $T_{y}\C P^{N}$ is therefore at least $2k+2$.  An open subset of $\C P^{N}$ containing $x$ must therefore contain points $y$ with $rk(dF_{y}) \geq 2k+2$.  Repeating this argument, $\Omega$ must meet all open subsets of $\C P^{N}$. \end{proof}

The energy $E_{2}(F)$ fits naturally into a $1$-parameter family of functionals: 

\begin{definition}[cf. \cite{Wh1,HL2,Wh3,We1}]
\label{p_energy_def}
Let $F:(N^{n},h) \rightarrow (M^{m},g)$ be a Lipschitz map of Riemannian manifolds.  For $p \geq 1$, the $p$-energy of $F$ is: 

\begin{equation}
\label{p-energy_eqn}
\displaystyle E_{p}(F) = \frac{1}{2} \int\limits_{N} |dF_{x}|^{p} dVol_{h}.  
\end{equation}
\end{definition}

Note that for $p \neq 2$, continuous maps which are critical for $E_{p}(F)$ need not be smooth, cf. \cite{HL2}.  Note also that the definition of $p$-energy in some papers differs from (\ref{p-energy_eqn}) by a constant multiple.  The following lower bound for the $p$-energy of a map generalizes the well-known fact that for mappings of surfaces, the energy is bounded below by the area of the image, with equality precisely for conformal maps: 

\begin{lemma}
\label{elementary_lemma}
Let $(N^{n},h)$ be a finite-volume Riemannian manifold and $F:(N^{n},h) \rightarrow (M^{m},g)$ a Lipschitz map, and define $Vol_{h}(N,F^{*}g)$ to be: 

\begin{equation}
\label{pullback_vol_eqn}
\displaystyle \int\limits_{N} \sqrt{det(dF_{x}^{T} \circ dF_{x})} dVol_{h}. 
\end{equation}

Then for $p \geq n$, 

\begin{equation}
\label{g_dim_energy_eqn}
\displaystyle E_{p}(F) \geq \frac{n^{\frac{p}{2}} Vol_{h}(N,F^{*}g)^{\frac{p}{n}}}{2 \ Vol(N,h)^{\frac{p-n}{n}}}. \smallskip 
\end{equation}

For $p=n$, equality holds if and only if $dF_{x}$ is a homothety at almost all $x \in N$.  For $p > n$, equality holds if and only if $dF_{x}$ is a homothety, by a constant factor $\kappa_{F}$, at almost all $x \in N$. \end{lemma}

\begin{proof} For $p=n$ this follows from the pointwise inequality $|dF_{x}|^{p} \geq n^{\frac{p}{2}} (\sqrt{det(dF_{x}^{T} \circ dF_{x})})^{\frac{p}{n}}$, which follows from (\ref{energy_frame_eqn}) and the arithmetic-geometric mean inequality for the eigenvalues of $F^{*}g$ relative to $h$.  For $p > n$, this follows from the result for $p = n$ and H\"older's inequality. \end{proof}

Note that in Lemma \ref{elementary_lemma}, equality for at least one $p > n$ implies equality for all $p \geq n$, and in fact that (\ref{g_dim_energy_eqn}) is an equality for all $p \geq 1$.  If $F$ is smooth, equality for $p=n$ in Lemma \ref{elementary_lemma} says that $F$ is semiconformal, i.e. $F^{*}g = \mu(x) h$ for a nonnegative function $\mu$ on $N$, and equality for $p > n$ says that $F$ is a homothety, i.e. $F^{*}g$ is a rescaling of $h$.  Lemma \ref{elementary_lemma} leads to the following rigidity statement for pluriharmonic maps of complex projective space: 

\begin{lemma}
\label{pluri_homothety_lemma}
Let $F:(\C P^{N},g_{0}) \rightarrow (M,g)$ be a pluriharmonic map from complex projective space to a Riemannian manifold with constant energy density $|dF|^{2}$.  Then $F$ is a homothety. 
\end{lemma}

\begin{proof} By Proposition \ref{holomorphic_corollary}, the energy of $F$ is equal to $C_{N}\mathcal{A}_{*}$, where $\mathcal{A}_{*}$ is the common value of $E_{2}(F|_{\mathcal{P}})$ for $\mathcal{P} \in \mathcal{L}$.  The constancy of $|dF|^{2}$ then implies that for all $p \geq 1$, 

\begin{equation}
\label{pluri_homothety_pf_eqn_1}
\displaystyle E_{p}(F) = \frac{\pi^{N}}{2 N!} \left( \frac{2N}{\pi} \mathcal{A}_{*} \right)^{\frac{p}{2}}. \smallskip 
\end{equation}

Letting $\omega^{*}$ be the closed $2$-form on $\C P^{N}$ associated to the Hermitian form $F^{*}g$ as in Theorem \ref{bochner_result}, for $\mathcal{P} \in \mathcal{L}$ we have: 

\begin{equation}
\label{pluri_homothety_pf_eqn_2}
\displaystyle \int\limits_{\mathcal{P}}\omega^{*} = |F(\mathcal{P})| = \mathcal{A}_{*}. 
\end{equation}

Letting $\omega_{0}$ be the K\"ahler form of $g_{0}$, this implies that $\omega^{*}$ is cohomologous to $\frac{\mathcal{A}_{*}}{\pi} \omega_{0}$, and therefore that, letting $Vol_{g_{0}}(\C P^{N},F^{*}g)$ be as defined in Lemma \ref{elementary_lemma}, 

\begin{equation}
\label{pluri_homothety_pf_eqn_3}
\displaystyle Vol_{g_{0}}(\C P^{N},F^{*}g) = \frac{1}{N!} \int\limits_{\C P^{N}} \omega^{*^{N}} = \frac{\mathcal{A}_{*}^{N}}{N!}. 
\end{equation}

By (\ref{pluri_homothety_pf_eqn_1}) and (\ref{pluri_homothety_pf_eqn_3}), $E_{p}(F)$ is equal to the lower bound for $p$-energy in Lemma \ref{elementary_lemma} for all $p \geq 2N$.  Because this is the case for $p > 2N$, Lemma \ref{elementary_lemma} implies $F$ is a homothety. \end{proof}

We now prove Theorem \ref{cpn_p_energy_thm}.  In the proof of Theorem \ref{cpn_p_energy_thm} we also show, as Croke showed in \cite{Cr1}, that the value given in Theorem \ref{infimum} for the infimum of the energy in a homotopy class $\Phi$ from complex projective space to a Riemannian manifold is a lower bound for the energy of maps $F \in \Phi$.  We complete the proof of Theorem \ref{infimum} in Section \ref{infima} by showing that it is the infimum in each homotopy class. 

\begin{proof}[Proof of Theorem \ref{cpn_p_energy_thm}] Let $F:(\C P^{N},g_{0}) \rightarrow (M,g)$ be a Lipschitz map, and let $\mathcal{L}$ be as in Definition \ref{measure_space}.B.  By Theorem \ref{cpn_energy_formula_lemma} and the fact that $E_{2}(F|_{\mathcal{P}}) \geq |F(\mathcal{P})| \geq A^{\star}$ for all $\mathcal{P}  \in \mathcal{L}$, where $A^{\star}$ is the infimal area as above, 
		
\begin{equation}
\label{cpn_p_pf_eqn_1}
\displaystyle E_{2}(F) \geq C_{N} A^{\star}, \smallskip 
\end{equation}
where $C_{N}$ is as in Theorem \ref{infimum}.  For $p > 2$, H\"older's inequality then implies:  

\begin{equation}
\label{cpn_p_pf_eqn_2}
\displaystyle E_{p}(F) = \frac{1}{2} \int\limits_{\C P^{N}} |dF|^{p} dVol_{g_{0}} \geq \left( \frac{\pi^{N}}{2 N!} \right)^{1 - \frac{p}{2}} E_{2}(F)^{\frac{p}{2}}, 
\end{equation}
which together with (\ref{cpn_p_pf_eqn_1}) gives the inequality in Theorem \ref{cpn_p_energy_thm}. \\ 

Suppose $p > 2$ and equality holds for $E_{p}(F)$.  Equality then holds in (\ref{cpn_p_pf_eqn_1}), which implies $F$ is energy-minimizing in its homotopy class and pluriharmonic by Theorem \ref{ohnita_theorem}.  Equality also holds in H\"older's inequality in (\ref{cpn_p_pf_eqn_2}), which implies $|dF|^{2}$ is constant and $F$ is a homothety by Lemma \ref{pluri_homothety_lemma}.  The harmonicity of $F$ implies its image is a minimal submanifold. \end{proof}  

In Theorem \ref{rpn_p_energy_thm}, proven below, we establish a lower bound for the $p$-energy of a map of real projective space and then show that a map which attains this lower bound is totally geodesic; that is, its second fundamental form vanishes.  Totally geodesic maps of K\"ahler manifolds are pluriharmonic.  In this sense, the conclusion that a map of real projective space which attains the lower bound in Theorem \ref{rpn_p_energy_thm} is totally geodesic is stronger than the conclusion in Theorem \ref{cpn_p_energy_thm} that such a map of complex projective space is pluriharmonic.  The Veronese embedding $\C P^{N} \rightarrow \C P^{\binom{N+d}{d}-1}$ of degree $d \geq 2$ shows that a map of complex projective space which attains the lower bound for $p$-energy in Theorem \ref{cpn_p_energy_thm} need not be totally geodesic. 

\begin{proof}[Proof of Theorem \ref{rpn_p_energy_thm}] Let $F$ be a Lipschitz map from $(\R P^{n},g_{0})$ to a Riemannian manifold $(M,g)$, and let $\mathcal{G}$ and $d\gamma$ be as in Definition \ref{measure_space}.A.  For $\gamma \in \mathcal{G}$, let $\gamma: [0,\pi] \rightarrow \R P^{n}$ be a unit-speed parametrization of $\gamma$ and $F \circ \gamma: [0,\pi] \rightarrow M$ the associated parametrization of the path $F \circ \gamma$ in $(M,g)$.  Note that $F \circ \gamma$ is a Lipschitz path with a well-defined length $|F \circ \gamma|$ for all $\gamma \in \mathcal{G}$.  By the identity (\ref{norm_squared_formula}), the Cauchy-Schwarz inequality, and Fubini's theorem as in (\ref{examp_calc}),   

\begin{equation*}
\displaystyle \displaystyle E_{1}(F) = \frac{\sqrt{n}}{2\sqrt{\sigma(n-1)}} \int\limits_{\R P^{n}} \left( \int\limits_{U_{x}(\R P^{n},g_{0})} |dF(\vec{u})|^{2} d\vec{u} \right)^{\frac{1}{2}} dx \geq \frac{\sqrt{n}}{2 \sigma(n-1)} \int\limits_{U(\R P^{n},g_{0})} |dF(\vec{u})| d\vec{u}  \smallskip
\end{equation*}
\begin{equation}
\label{rpn_pf_eqn_1}
\displaystyle = \frac{\sqrt{n}}{2 \sigma(n-1)} \int\limits_{\mathcal{G}}\int\limits_{\gamma} |(F \circ \gamma)'(t)| dt d\gamma = \frac{\sqrt{n}}{2 \sigma(n-1)} \int\limits_{\mathcal{G}} |F \circ \gamma| d\gamma. 
\end{equation}

Because $|F \circ \gamma|$ is bounded below by the infimal length $L^{\star}$, this implies the inequality in Theorem \ref{rpn_p_energy_thm} for $p=1$.  For $p > 1$, H\"older's inequality implies: 

\begin{equation}
\label{rpn_pf_eqn_3}
\displaystyle E_{p}(F) = \frac{1}{2} \int\limits_{\R P^{n}} |dF_{x}|^{p} dVol_{g_{0}} \geq \left(\frac{\sigma(n)}{4}\right)^{1-p} E_{1}(F)^{p}, 
\end{equation}
which together with the inequality for $p=1$ gives the inequality for $p > 1$. \\ 

Suppose equality holds for $p=1$. \\ 

Supposing only that $F$ is Lipschitz, this implies equality holds in the Cauchy-Schwarz inequality in (\ref{rpn_pf_eqn_1}) for a.e. $x \in \R P^{n}$.  For all $x$ at which this equality holds, $|dF_{x}(\vec{u})|$ depends only on $x$.  This implies $F^{*}g$ is a.e. equal to $\mu(x) g_{0}$, where $\mu$ is a nonnegative function on $\R P^{n}$.  Equality also implies that $|F \circ \gamma| = L^{\star}$ for almost all $\gamma \in \mathcal{G}$.  Because $|F \circ \gamma| \geq L^{\star}$ and $|F \circ \gamma|$ is a lower semicontinuous function of $\gamma \in \mathcal{G}$, we have $|F \circ \gamma| = L^{\star}$ for all $\gamma$.  The image via $F$ of each $\gamma \in \mathcal{G}$ is therefore a closed geodesic in $(M,g)$, of minimal length $L^{\star}$ in its free homotopy class, although we do not know that $t \mapsto (F \circ \gamma)(t)$ is a constant-speed parametrization. \\ 

If in addition $F$ is a smooth immersion, then because each geodesic $\gamma \in \mathcal{G}$ maps to a closed geodesic in $(M,g)$, the image of $F$ is a totally geodesic submanifold of $(M,g)$.  Because each geodesic $\gamma$ maps to a curve $F \circ \gamma$ of minimal length in its free homotopy class in $(M,g)$, $F^{*}g$ is a Blaschke metric on $\R P^{n}$, cf. Remark \ref{blaschke_remark} below.  The Blaschke conjecture for real projective space, proven in dimension $2$ by Green \cite{Gn} and in all dimensions by Berger and Kazdan \cite{Be1}, implies $F^{*}g$ has constant curvature and is therefore isometric up to scale to $g_{0}$.  After rescaling if necessary, letting $\iota$ be a diffeomorphism of $\R P^{n}$ which gives an isometry from $(\R P^{n},F^{*}g)$ to $(\R P^{n},g_{0})$, we have $\iota^{*}g_{0} = F^{*}g = \mu(x) g_{0}$, where $\mu$ is the semiconformal factor above and is in fact a conformal factor, i.e. is everywhere-defined and positive, because $F$ is a smooth immersion.  This implies $\iota$ is a conformal diffeomorphism of $(\R P^{n},g_{0})$.  The classification of conformal diffeomorphisms of the covering $(S^{n},g_{0})$ of $(\R P^{n},g_{0})$ then  implies $\iota$ is an isometry, which implies $\mu$ is constant and $F$ is an isometry or a homothety. \\ 

Now suppose $p > 1$ and equality holds for $p$. \\ 

Supposing only that $F$ is Lipschitz, this implies all the conditions which hold for Lipshitz maps when equality holds for $p=1$ and also implies equality in H\"older's inequality in (\ref{rpn_pf_eqn_3}).  This implies $|dF|$ is a.e. equal to a constant $k_{F}$, and therefore that equality holds for all $p \geq 1$.  Because equality holds for $p = 2$, $F$ is harmonic and therefore smooth, and because $|dF|$ is constant, the semiconformal factor $\mu$ is a constant function.  $F$ is therefore either a constant map, if $\mu \equiv 0$, or a homothety if $\mu \equiv \text{const.} > 0$.  In the latter case $F$ is totally geodesic by the result for $p=1$. \end{proof} 

\begin{remark}
\label{blaschke_remark}
To see that equality when $p=1$ and $F$ is an immersion implies that $F^{*}g$ is a Blaschke metric on $\R P^{n}$, i.e. that the covering metric on $S^{n}$ has diameter equal to its injectivity radius, let $c:[0,L^{\star}] \rightarrow (\R P^{n},F^{*}g)$ be a unit-speed geodesic and $\widetilde{c}$ a lift of $c$ to $S^{n}$.  The geodesic $\widetilde{c}([0,L^{\star}])$ is minimizing in $S^{n}$, because $c([0,L^{\star}])$ is length-minimizing in its homotopy class in $(\R P^{n},F^{*}g)$, and $\widetilde{c}(L^{\star})$ is conjugate to $\widetilde{c}(0)$ along $\widetilde{c}$, because all geodesics based at $\widetilde{c}(0)$ of length $L^{\star}$ are lifts of homotopic closed curves in $\R P^{n}$ and therefore have the same endpoint.  Note that there are several equivalent properties that are sometimes taken as the definition of a Blaschke metric, discussed in \cite[Ch. 5]{Be1}. 
\end{remark} 

Also, note that the case $p=1$ of Theorem \ref{rpn_p_energy_thm} is false without the stipulation that $n \geq 2$: the $1$-energy of any diffeomorphism of $\R P^{1}$ is equal to that of the identity.  It would be interesting to find weaker hypotheses under which this part of the theorem holds.  One can modify the proof of Theorem \ref{rpn_p_energy_thm} above to show that if a map $F:(\R P^{n},g_{0}) \rightarrow (M,g)$ realizes the lower bound for $1$-energy in the theorem, then this is also the case for $F|_{\R P^{2}}$ for all totally geodesic $\R P^{2} \subseteq \R P^{n}$.  A proof of this part of Theorem \ref{rpn_p_energy_thm} for $(\R P^{2},g_{0})$ under weaker assumptions therefore gives a result in all dimensions.  In this way, for example, one can show via \cite[Theorem 4.53]{Be1} that the $p=1$ case of Theorem \ref{rpn_p_energy_thm} extends to $C^{1}$ immersions. 


\section{Infima of the Energy Functional in Homotopy Classes of Mappings}  
\label{infima} 


In this section we prove Theorem \ref{infimum}, and we discuss some corollaries of our results about pluriharmonic maps of complex projective space, in Remark \ref{pluriharmonic_remark}.  We also discuss the problem of determining the infimum of the energy in a homotopy class of mappings of real projective space and establish a two-sided estimate for the infimum of the energy in a homotopy class of mappings of real projective $3$-space, in Proposition \ref{rp3_inf_thm}. \\ 

The proof of Theorem \ref{infimum} begins with the fact that in any homotopy class of mappings from the $2$-sphere to a Riemannian manifold, the infimal energy is equal to the infimal area.  Sacks and Uhlenbeck established in \cite{SU} that the second homotopy group of any closed Riemannian manifold is generated by free homotopy classes containing harmonic maps that minimize both energy and area, however, as they discuss, the sum of two such classes may not contain a continuous, energy-minimizing map.  We show that the two infima are equal in all homotopy classes via the following lemma, adapted from the solution of Plateau's problem: 

\begin{lemma}
\label{2-sphere_lemma}

Let $f:(S^{2},g_{0}) \rightarrow (M,g)$ be a smooth map to a Riemannian manifold and $A(f)$ the area of its image.  Then for any $\delta > 0$ there is a diffeomorphism $\phi: S^{2} \rightarrow S^{2}$, homotopic to the identity, such that $E_{2}(f \circ \phi) < A(f) + \delta$.  In particular, $f$ is homotopic to a map with energy less than $A(f) + \delta$.  If $f$ is antipodally invariant, we can choose $\phi$ and the homotopy to be antipodally invariant, so that the same conclusions hold for mappings of $(\R P^{2},g_{0})$. \end{lemma}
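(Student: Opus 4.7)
The plan is to carry out the classical $\epsilon$-regularization plus uniformization trick that underlies the parametric solution of Plateau's problem. The basic obstruction is that $f^{*}g$ need not be a Riemannian metric on $\C P^{1}$: it degenerates wherever $df$ fails to have full rank, so uniformization cannot be applied to it directly. To get around this, for each $\epsilon > 0$ I would form the smooth genuine Riemannian metric
\begin{equation*}
h_{\epsilon} = f^{*}g + \epsilon g_{0}
\end{equation*}
on $\C P^{1}$ and apply the uniformization theorem for the $2$-sphere to produce an orientation-preserving diffeomorphism $\phi_{\epsilon}:\C P^{1} \rightarrow \C P^{1}$ with $\phi_{\epsilon}^{*}h_{\epsilon} = \mu_{\epsilon} g_{0}$ for some positive function $\mu_{\epsilon}$. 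Because the orientation-preserving mapping class group of $S^{2}$ is trivial, $\phi_{\epsilon}$ is isotopic to the identity, which supplies the homotopy condition required by the lemma.

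The key computation runs as follows. From $\phi_{\epsilon}^{*}(f^{*}g) = \mu_{\epsilon} g_{0} - \epsilon \phi_{\epsilon}^{*}g_{0}$, taking the $g_{0}$-trace yields $|d(f \circ \phi_{\epsilon})|_{g_{0}}^{2} = 2\mu_{\epsilon} - \epsilon |d\phi_{\epsilon}|_{g_{0}}^{2}$. Integrating over $\C P^{1}$, and using that $\phi_{\epsilon}$ is a diffeomorphism so that $\int_{\C P^{1}} \mu_{\epsilon} \, dVol_{g_{0}}$ is just $\mathrm{Area}(\C P^{1}, h_{\epsilon})$, one obtains
\begin{equation*}
E_{2}(f \circ \phi_{\epsilon}) = \mathrm{Area}(\C P^{1}, h_{\epsilon}) - \epsilon E_{2}(\phi_{\epsilon}) \leq \mathrm{Area}(\C P^{1}, h_{\epsilon}).
\end{equation*}
The sign of the error term is favorable, so no further control of $E_{2}(\phi_{\epsilon})$ is needed. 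Denoting by $\lambda_{1}(x), \lambda_{2}(x)$ the eigenvalues of $f^{*}g$ relative to $g_{0}$, the right-hand side equals $\int_{\C P^{1}} \sqrt{(\lambda_{1}+\epsilon)(\lambda_{2}+\epsilon)} \, dVol_{g_{0}}$, which decreases to $\int_{\C P^{1}} \sqrt{\lambda_{1}\lambda_{2}} \, dVol_{g_{0}} = A(f)$ as $\epsilon \rightarrow 0$ by monotone convergence. Choosing $\epsilon$ small enough that this quantity is less than $A(f) + \delta$ and setting $\phi = \phi_{\epsilon}$ finishes the first claim.

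For the antipodally invariant case, both $f^{*}g$ and $g_{0}$ are preserved by the antipodal involution $\tau$, so $h_{\epsilon}$ descends to a smooth Riemannian metric on $\R P^{2}$. Applying uniformization on $\R P^{2}$ (equivalently, making a $\tau$-equivariant choice in the uniformization of $S^{2}$, available because the uniformizing diffeomorphism is unique up to the M\"obius group) produces a $\tau$-equivariant $\phi_{\epsilon}$, and the energy calculation above goes through unchanged. The only real subtlety in the argument is ensuring the $\epsilon$-regularization cost vanishes in the limit; this is guaranteed by the pointwise monotone convergence of $\sqrt{(\lambda_{1}+\epsilon)(\lambda_{2}+\epsilon)}$ to $\sqrt{\lambda_{1}\lambda_{2}}$, which converts the Hopf-type inequality $E_{2} \geq A$ (Lemma \ref{elementary_lemma}) into an infimum identity on the nose.
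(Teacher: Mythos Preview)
Your argument is correct and is essentially the paper's own proof. The paper phrases the regularization geometrically, replacing $f$ by the immersion $f_{r}=(f,\mathrm{id}):\C P^{1}\to (M,g)\times(S^{2},r^{2}g_{0})$ and then uniformizing; since $f_{r}^{*}(g\times r^{2}g_{0})=f^{*}g+r^{2}g_{0}$, this is exactly your $h_{\epsilon}$ with $\epsilon=r^{2}$, and your identity $E_{2}(f\circ\phi_{\epsilon})=\mathrm{Area}(\C P^{1},h_{\epsilon})-\epsilon E_{2}(\phi_{\epsilon})$ is precisely the ``elementary calculation'' the paper leaves to the reader.
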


\begin{proof} If $f$ is an immersion, the result follows immediately from the uniformization theorem. Otherwise, let $f_{r}:(S^{2},g_{0}) \rightarrow (M,g) \times (S^{2},rg_{0})$ be the product of $f$ with a homothety to a round sphere $(S^{2},rg_{0})$ of constant curvature $\frac{1}{r}$.  The uniformization theorem then implies there is a diffeomorphism $\phi_{r}:(S^{2},g_{0}) \rightarrow (S^{2},g_{0})$ homotopic to the identity such that $f_{r} \circ \phi_{r}$ is conformal.  By Lemma \ref{elementary_lemma}, we have $E_{2}(f_{r} \circ \phi_{r}) = |Im(f_{r} \circ \phi_{r})| = |Im(f_{r})|$.  An elementary calculation shows that $E_{2}(f \circ \phi_{r}) < E_{2}(f_{r} \circ \phi_{r})$, and that by choosing $r$ small enough, we can ensure that $|Im(f_{r})| < |Im(f)| + \delta$ for any $\delta > 0$.  Because $\phi_{r}$ is homotopic to the identity, $f$ is homotopic to $f \circ \phi_{r}$.  If $f$ is antipodally invariant, the uniformization theorem and homotopy lifting property imply that we can choose $\phi_{r}$ and the homotopy to be antipodally invariant. \end{proof}

\begin{proof}[Proof of Theorem \ref{infimum}] Let $\Phi$ be a homotopy class of mappings from $(\C P^{N},g_{0})$ to a Riemannian manifold $(M,g)$.  The inequality (\ref{cpn_p_pf_eqn_1}) in the proof of Theorem \ref{cpn_p_energy_thm} shows that the value given in Theorem \ref{infimum} is a lower bound for the energy of maps $F \in \Phi$.  To see that this lower bound is the infimum, let $[z_{0}:z_{1}:z_{2}:\dots:z_{N}]$ be homogeneous coordinates for $\C P^{N}$ and $\mathcal{P}_{0}$ the following degree-$1$ curve in $\C P^{N}$: 
		
\begin{equation}
\displaystyle \mathcal{P}_{0} = \lbrace [z_{0}:z_{1}:0:\dots:0] \ | \ (z_{0},z_{1}) \neq (0,0) \rbrace. \smallskip 
\end{equation}   

For $\lambda > 0$, let $T_{\lambda}:\C P^{N} \rightarrow \C P^{N}$ be the projective linear transformation associated to the following linear transformation of $\C^{N + 1}$:  

\begin{equation}
\label{inf_pf_eqn_2}
\displaystyle (z_{0},z_{1},z_{2},\dots,z_{N}) \mapsto (\lambda z_{0},\lambda z_{1},z_{2},\dots,z_{N}). \smallskip 
\end{equation}

Let $\mathcal{L}$ be as in Definition \ref{measure_space}.B.  Because $T_{\lambda}:\C P^{N} \rightarrow \C P^{N}$ is biholomorphic, $T_{\lambda}|_{\mathcal{P}}$ is conformal for all $\mathcal{P} \in \mathcal{L}$.  For any map $F:(\C P^{N},g_{0}) \rightarrow (M,g)$ and $\mathcal{P} \in \mathcal{L}$, we therefore have:  

\begin{equation}
\label{inf_pf_eqn_3}
\displaystyle E_{2}(F \circ T_{\lambda}|_{\mathcal{P}}) = E_{2}(F|_{T_{\lambda}(\mathcal{P})}). \smallskip 
\end{equation}

Let $\mathcal{C}(\mathcal{P}_{0})$ be the intersection of the cut loci in $(\C P^{N},g_{0})$ of points $x \in \mathcal{P}_{0}$; that is:   

\begin{equation}
\displaystyle \mathcal{C}(\mathcal{P}_{0}) = \lbrace [0:0:z_{2}:\dots:z_{N}] \ | \ (z_{2},\dots,z_{N}) \neq (0,\dots,0) \rbrace. \smallskip 
\end{equation}

For all $\mathcal{P} \in \mathcal{L}$ which do not intersect $\mathcal{C}(\mathcal{P}_{0})$, $T_{\lambda}(\mathcal{P})$ converges to $\mathcal{P}_{0}$ in the $C^{1}$ topology on submanifolds of $\C P^{N}$ as $\lambda \rightarrow \infty$.  By (\ref{inf_pf_eqn_3}), for any $C^{1}$ map $F$ and $\mathcal{P} \in \mathcal{L}$ disjoint from $\mathcal{C}(\mathcal{P}_{0})$, we therefore have:   

\begin{equation}
\label{inf_pf_eqn_4}
\displaystyle \lim\limits_{\lambda \rightarrow \infty} E_{2}(F \circ T_{\lambda}|_{\mathcal{P}})  = E_{2}(F|_{\mathcal{P}_{0}}). \smallskip 
\end{equation}

The set of $\mathcal{P} \in \mathcal{L}$ which intersect $\mathcal{C}(\mathcal{P}_{0})$ has measure $0$ relative to the measure $d\mathcal{P}$.  For any $C^{1}$ map $F:(\C P^{N},g_{0}) \rightarrow (M,g)$, Theorem \ref{cpn_energy_formula_lemma}, the limit in (\ref{inf_pf_eqn_4}), and the dominated convergence theorem then imply:  

\begin{equation}
\label{inf_pf_eqn_5}	
\displaystyle \lim\limits_{\lambda \rightarrow \infty} E_{2}(F \circ T_{\lambda}) = \lim\limits_{\lambda \rightarrow \infty} \frac{N!}{\pi^{N-1}} \int\limits_{\mathcal{L}} E_{2}(F \circ T_{\lambda}(\mathcal{P})) d\mathcal{P} = C_{N} E_{2}(F|_{\mathcal{P}_{0}}), 
\end{equation}
where $C_{N}$ is the constant in Theorem \ref{infimum}.  Letting $A^{\star}$ be the infimal area as above, Lemma \ref{2-sphere_lemma} and the homotopy extension property for $\C P^{1} \subseteq \C P^{N}$ imply that $\Phi$ contains maps $F$ with $E_{2}(F|_{\mathcal{P}_{0}}) < A^{\star} + \varepsilon$ for any $\varepsilon > 0$.  Letting $F$ be such a map in (\ref{inf_pf_eqn_5}) completes the proof. \end{proof} 

\begin{remark}
\label{pluriharmonic_remark} 
For complex projective space, Theorem \ref{ohnita_theorem} and the results about pluriharmonic maps in Sections \ref{stable_harmonic_mappings} and \ref{lower_bounds} lead to a partial converse to Lichnerowicz's theorem in \cite{Li1} that holomorphic and antiholomorphic maps to K\"ahler manifolds minimize energy in their homotopy class, in the following two senses:  First, Theorems \ref{ohnita_theorem} and \ref{bochner_result} and Lemma \ref{immersion_lemma} imply that if $F:(\C P^{N},g_{0}) \rightarrow (M,g)$ is a nonconstant map which minimizes energy in its homotopy class, then $F^{*}g$ is a K\"ahler metric on an open, dense subset $\Omega$ of $\C P^{N}$ on which $F$ is an immersion.  The pluriharmonicity of $F$ also implies that the second fundamental form of $F(\Omega)$ in $(M,g)$ can be diagonalized by a unitary basis, like the second fundamental form of a K\"ahler submanifold.  In this sense, $F$ is indistinguishable along $\Omega$ ``to second order" from a holomorphic map to a K\"ahler manifold.  Second, letting $\omega^{*}$ be the closed $2$-form on $\C P^{N}$ associated to $F^{*}g$ as in Theorem \ref{bochner_result} and $\omega_{0}$ the K\"ahler form of $g_{0}$, $\omega^{*}$ is cohomologous up to scale to $\omega_{0}$, as in the proof of Lemma \ref{pluri_homothety_lemma}.  By the $\partial \overline{\partial}$-lemma, after rescaling if necessary, we have:

\begin{equation}
\displaystyle \omega^{*} = \omega_{0} + i\partial\overline{\partial} \xi, \smallskip 
\end{equation}
where $\xi$ is a smooth, real-valued function on $\C P^{N}$.  The positivity of the $2$-form $\omega_{0}$ and the nonnegativity of the $2$-form $\omega^{*}$ imply that for all $0 \leq \rho <1$, the $2$-form $\omega_{0} + \rho i\partial\overline{\partial} \xi$ is positive and thus induces a K\"ahler metric $g_{\rho}$ on $\C P^{N}$.  In this sense, $F$ can be seen as a limit of locally biholomorphic maps given by the identity mapping $(\C P^{N},g_{0}) \rightarrow (\C P^{N},g_{\rho})$ as $\rho \rightarrow 1$. \\ 

In some cases, stable harmonic and energy-miminizing maps of complex projective space must be holomorphic or antiholomorphic: Ohnita proved that stable harmonic maps between complex projective spaces are holomorphic or antiholomorphic in \cite{Oh1}.  Burns, Burstall, de Bartolomeis and Rawnsley then showed that this is the case for stable harmonic maps from complex projective space to any compact simple Hermitian symmetric space in \cite{BBdBR1}.  It also follows from the results of Lichnerowicz in \cite{Li1} that if a homotopy class of mappings between compact K\"ahler manifolds contains a holomorphic map, then all energy-minimizing maps in that class are holomorphic, with a corresponding statement for antiholomorphic maps.  In general however, although energy-minimizing maps of $(\C P^{N},g_{0})$ resemble holomorphic maps locally as described above, they may not be globally injective or orientation-preserving. 
\end{remark}

We conclude by discussing the problem of finding the infimum of the energy in a homotopy class of mappings of real projective space.  We will draw on the result of Pu's systolic inequality:  

\begin{theorem}{\em (Pu \cite{Pu}, see also \cite{CK1})}
\label{pu_theorem}
Let $g$ be a Riemannian metric on $\R P^{2}$, $A(\R P^{2},g)$ its area, and $\sys(g)$ the minimal length of a noncontractible curve in $(\R P^{2},g)$ (called a systole).  Then $A(\R P^{2},g) \geq \frac{2}{\pi} \sys(g)^{2}$.  Equality holds if and only if $g$ has constant curvature. 
\end{theorem}

We have the following formula for the energy of a map of real projective space: 

\begin{lemma}
\label{rpn_area_lemma}
Let $F:(\R P^{n},g_{0}) \rightarrow (M,g)$ be a Lipschitz map to a Riemannian manifold, $\mathcal{H}$ the set of totally geodesic $\mathcal{Q} \cong \R P^{2} \subseteq \R P^{n}$, and $d\mathcal{Q}$ the measure on $\mathcal{H}$ which is invariant under the action of the isometry group of $(\R P^{n},g_{0})$, normalized to have total volume $\frac{n \sigma(n)}{8 \pi}$.  Then:  

\begin{equation}
\displaystyle E_{2}(F) = \int\limits_{\mathcal{H}} E_{2}(F|_{\mathcal{Q}}) d\mathcal{Q}. 
\end{equation}
\end{lemma}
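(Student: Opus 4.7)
The plan is to model the proof on Lemma \ref{cpn_energy_formula_lemma}: express both sides as integrals over a single incidence variety and identify them via Fubini, with Croke's formula (Lemma \ref{chris_lemma}) converting between energy and unit-tangent-bundle integrals on both sides. The normalization of $d\mathcal{Q}$ is precisely chosen so the resulting constant is $1$.

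First I would introduce the incidence variety
\begin{equation*}
\widetilde{U} = \{(\mathcal{Q},\vec{u}) : \mathcal{Q} \in \mathcal{H}, \ \vec{u} \in U(\mathcal{Q},g_{0}|_{\mathcal{Q}})\},
\end{equation*}
equipped with the product measure from $d\mathcal{Q}$ on $\mathcal{H}$ and the canonical measure on each $U(\mathcal{Q},g_{0}|_{\mathcal{Q}})$. It carries two natural projections: $\pi_{1}:\widetilde{U}\to\mathcal{H}$ with fiber $U(\mathcal{Q},g_{0}|_{\mathcal{Q}})$, and $\pi_{2}:\widetilde{U}\to U(\R P^{n},g_{0})$ sending $(\mathcal{Q},\vec{u})\mapsto\vec{u}$, whose fiber over $\vec{u}\in U_{x}(\R P^{n})$ is the space of totally geodesic $\R P^{2} \subseteq \R P^{n}$ through $x$ with $\vec{u}$ tangent to them, naturally identified with $\R P^{n-2}$.

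Next, I would apply Lemma \ref{chris_lemma} inside the right-hand integrand, since $F|_{\mathcal{Q}}$ is Lipschitz on $\mathcal{Q} \cong \R P^{2}$:
\begin{equation*}
E_{2}(F|_{\mathcal{Q}}) = \frac{1}{2\pi}\int\limits_{U(\mathcal{Q},g_{0}|_{\mathcal{Q}})}|dF(\vec{u})|^{2}\,d\vec{u}.
\end{equation*}
Fubini along $\pi_{1}$ then gives
\begin{equation*}
\int\limits_{\mathcal{H}} E_{2}(F|_{\mathcal{Q}})\,d\mathcal{Q} = \frac{1}{2\pi}\int\limits_{\widetilde{U}}|dF(\pi_{2}(z))|^{2}\,dz.
\end{equation*}
The key step is to push forward along $\pi_{2}$. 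Both $d\mathcal{Q}$ and the canonical unit-tangent-bundle measures on the fibers are invariant under the isometry group of $(\R P^{n},g_{0})$, so the product measure on $\widetilde{U}$ is invariant under the natural lifted action. Hence $(\pi_{2})_{\#}$ of this measure is an isometry-invariant measure on $U(\R P^{n},g_{0})$, which by transitivity must be $c \cdot dVol_{U}$ for some constant $c$.

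To pin down $c$, I would compare total masses. The total mass of $\widetilde{U}$ is $\frac{n\sigma(n)}{8\pi}\cdot\mathrm{Vol}(U(\R P^{2},g_{0})) = \frac{n\sigma(n)}{8\pi}\cdot 4\pi^{2} = \frac{n\pi\sigma(n)}{2}$, using $\mathrm{Vol}(\R P^{2},g_{0})=2\pi$, while the total mass of $U(\R P^{n},g_{0})$ is $\frac{\sigma(n)}{2}\sigma(n-1)$. Thus $c = \frac{n\pi}{\sigma(n-1)}$, and substitution yields
\begin{equation*}
\int\limits_{\mathcal{H}} E_{2}(F|_{\mathcal{Q}})\,d\mathcal{Q} = \frac{1}{2\pi}\cdot\frac{n\pi}{\sigma(n-1)}\int\limits_{U(\R P^{n},g_{0})}|dF(\vec{u})|^{2}\,d\vec{u} = \frac{n}{2\sigma(n-1)}\int\limits_{U(\R P^{n},g_{0})}|dF(\vec{u})|^{2}\,d\vec{u},
\end{equation*}
which equals $E_{2}(F)$ by a second application of Lemma \ref{chris_lemma}. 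The only nontrivial point is the constant bookkeeping in the push-forward step, which is essentially forced by the stated normalization of $d\mathcal{Q}$; all geometric content comes from Croke's identity and isometry-invariance, neither of which presents a real obstacle.
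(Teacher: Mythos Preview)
Your proof is correct and follows exactly the approach the paper intends: the paper simply says the lemma ``can be derived from Lemma \ref{chris_lemma}'' and provides no further detail, and your argument is the natural adaptation of the proof of Lemma \ref{cpn_energy_formula_lemma} to the real case, with the incidence variety $\widetilde{U}$ correctly replacing the direct fibration $U(\C P^{N})\to\mathcal{L}$ (needed here because many totally geodesic $\R P^{2}$'s contain a given unit tangent vector). Your constant bookkeeping checks out.
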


\begin{proof} Let $\mathcal{I}$ be the set of pairs $(\vec{u},\mathcal{Q})$, where $\vec{u} \in U(\R P^{n},g_{0})$, $\mathcal{Q} \in \mathcal{H}$, and $\vec{u}$ is tangent to $\mathcal{Q}$, and let $dv$ be the measure on $\mathcal{I}$ which is invariant under the isometry group of $(\R P^{n},g_{0})$, with total volume $\frac{n\sigma(n)}{4}$.  By Lemma \ref{chris_lemma}, applied to both $F$ and $F|_{\mathcal{Q}}$, 
\begin{equation}
\displaystyle E_{2}(F) = \frac{n}{2\sigma(n-1)} \int\limits_{U(\R P^{n},g_{0})} |dF(\vec{u})|^{2} d\vec{u} = \int\limits_{\mathcal{I}} |dF(\vec{u})|^{2} dv = \int\limits_{\mathcal{H}} E_{2}(F|_{\mathcal{Q}}) d\mathcal{Q}. 
\end{equation}
\end{proof}

Let $\Psi$ be a homotopy class of mappings from $(\R P^{n},g_{0})$ to a Riemannian manifold $(M,g)$ and $\psi$ the class of mappings of $(\R P^{2},g_{0})$ represented by composing the inclusion $\R P^{2} \subseteq \R P^{n}$ with $F \in \Psi$.  Let $B^{\star}$ be the infimal area of mappings in $\psi$.  Then, as in the proof of Theorem \ref{cpn_p_energy_thm}, Lemma \ref{2-sphere_lemma} implies: 

\begin{equation}
\label{rpn_inf_ineq_1}
\displaystyle \inf\limits_{F \in \Psi} E_{2}(F) \geq \frac{n \sigma(n)}{8 \pi} B^{\star}. \smallskip 
\end{equation}

If the infimal area $B^{\star}$ is realized by an immersion $f:(\R P^{2},g_{0}) \rightarrow (M,g)$, then Theorem \ref{pu_theorem} implies: 

\begin{equation}
\label{pu_lemma_eqn}
\displaystyle B^{\star} \geq \frac{2}{\pi} \sys(f^{*}g)^{2} \geq \frac{2}{\pi} L^{\star^{2}}, \smallskip 
\end{equation}
where $L^{\star}$ is as in Theorem \ref{rpn_p_energy_thm}.  When the infimal area $B^{\star}$ is not realized by a smooth immersion, one can show that the inequality $B^{\star} \geq \frac{2}{\pi}L^{\star^{2}}$ in (\ref{pu_lemma_eqn}) holds by considering minimizing sequences for the area of maps $f \in \psi$.  Moreover, (\ref{pu_lemma_eqn}) must be a strict inequality unless the class $\psi$ contains a minimizing sequence with a fairly rigid characterization, particularly given the strengthened versions of Pu's inequality proven by Katz-Nowik and Katz-Sabourau in \cite{KN1,KS1}.  If the strict inequality $B^{\star} > \frac{2}{\pi}L^{\star^{2}}$ holds, then by (\ref{rpn_inf_ineq_1}) the infimal energy in the homotopy class $\Psi$ is strictly greater than the lower bound in Theorem \ref{rpn_p_energy_thm}. \\ 

Although Lemma \ref{rpn_area_lemma} suggests that the infimum of the energy in a homotopy class of mappings of $(\R P^{n},g_{0})$ is typically greater than the lower bound in Theorem \ref{rpn_p_energy_thm}, in Proposition \ref{rp3_inf_thm} we show that if $\Psi$ is a homotopy class of mappings of $(\R P^{3},g_{0})$ and $\psi$ is the induced class of mappings of $(\R P^{2},g_{0})$ as above, the infimum of the energy in $\Psi$ can be bounded from above as well as below by the infimal area in $\psi$.  An immediate corollary of the work of White in \cite{Wh1} implies that for all $n \geq 3$, the infimum of the energy in a homotopy class of mappings of $(\R P^{n},g_{0})$ is determined by the induced class of mappings of $(\R P^{2},g_{0})$ in the following sense: 

\begin{theorem}{\em (see \cite[Theorem 1]{Wh1})}
\label{white_prop}
Let $\Psi$ be a homotopy class of mappings from $(\R P^{n},g_{0})$ to a Riemannian manifold $(M,g)$, and let $\widehat{\Psi}$ be the union of all homotopy classes of mappings from $(\R P^{n},g_{0})$ to $(M,g)$ which give the same class of mappings of $(\R P^{2},g_{0})$ as $\Psi$ when composed with the inclusion $\R P^{2} \subseteq \R P^{n}$.  Then $\inf\limits_{F \in \Psi} E_{2}(F) = \inf\limits_{F \in \widehat{\Psi}} E_{2}(F)$. 
\end{theorem} 

\begin{remark}
\label{homotopy_dependence}
To derive Theorem \ref{white_prop} from \cite[Theorem 1]{Wh1}, note that in \cite{Wh1} maps are defined to be $2$-homotopic if their restrictions to the $2$-skeleton of a triangulation of their domain are homotopic.  If $F:(N,h) \rightarrow (M,g)$ is a map of a compact Riemannian manifold, it follows from \cite[Theorem 1]{Wh1} that the infimum of the energy in its homotopy class is equal to the infimum in its $2$-homotopy class.  Although $\R P^{2}$ is not the $2$-skeleton of a triangulation of $\R P^{n}$ (for $n \neq 2$), it is straightforward to check that the union of homotopy classes $\widehat{\Psi}$ in the theorem coincides with the $2$-homotopy class of maps $F \in \Psi$. 
\end{remark} 

In connection with Theorem \ref{infimum}, one can also infer from \cite[Theorem 1]{Wh1} that the infimum of the energy in a homotopy class of mappings from complex projective space to a Riemannian manifold depends only on the induced homomorphism on the second homotopy group.  These observations about the homotopy-theoretic dependence of the infimum do not seem to lead to explicit estimates for the infimal energy in most homotopy classes, although in some cases they imply it is $0$.  They also do not seem to indicate explicitly how the infimum is determined by the geometry of the spaces in question.  For homotopy classes of mappings of real projective $3$-space, however, the following two-sided estimate shows that the infimum never exceeds the lower bound in (\ref{rpn_inf_ineq_1}) by more than a third: 

\begin{proposition}
\label{rp3_inf_thm}

Let $\Psi$ be a homotopy class of mappings from $(\R P^{3},g_{0})$ to a Riemannian manifold $(M,g)$, $\psi$ the induced homotopy class of mappings of $(\R P^{2},g_{0})$, and $B^{\star}$ the infimal area of mappings $f \in \psi$ as above.  Then $\frac{3\pi}{4} B^{\star} \leq \inf\limits_{F \in \Psi} E_{2}(F) \leq \pi B^{\star}$.  
\end{proposition}

\begin{proof} That $\frac{3\pi}{4} B^{\star} \leq \inf\limits_{F \in \Psi} E_{2}(F)$ is (\ref{rpn_inf_ineq_1}).  To show that $\inf\limits_{F \in \Psi} E_{2}(F) \leq \pi B^{\star}$, note that for a fixed totally geodesic $\mathcal{Q}_{0} \cong \R P^{2}$ in $\R P^{3}$ and any $\varepsilon > 0$, the homotopy extension property for $\R P^{2} \subseteq \R P^{3}$ implies $\Psi$ contains maps $F$ with $E_{2}(F|_{\mathcal{Q}_{0}}) < B^{\star} + \varepsilon$, as in the proof of Theorem \ref{infimum}.  Let $F$ be such a map, $x_{0} \in \R P^{3}$ the unique point at distance $\frac{\pi}{2}$ from $\mathcal{Q}_{0}$, $\widetilde{x}_{0} \in S^{3}$ a point in the preimage of $x_{0}$ via the covering $\tau:S^{3} \rightarrow \R P^{3}$, and $\beta:S^{3} \setminus \lbrace -\widetilde{x}_{0} \rbrace \rightarrow \R^{3}$ the stereographic projection which takes $\widetilde{x}_{0}$ to the origin.  Let $\theta_{t}:S^{3} \rightarrow S^{3}$ be the conformal diffeomorphism of $S^{3}$ given by multiplication by $t$ in the coordinates defined by $\beta$.  An elementary calculation shows $\lim\limits_{t \rightarrow \infty} E_{2}(\theta_{t}) = 0$.  Let $D(\widetilde{x}_{0}) \subseteq S^{3}$ be the open hemisphere centered at $\widetilde{x}_{0}$ and define $\Theta_{t}: \R P^{3} \rightarrow \R P^{3}$ as follows: if $x \in \R P^{3}$ has a preimage $\widetilde{x}$ in $\theta_{t}^{-1}(D(\widetilde{x}_{0}))$, then $\Theta_{t}(x) = \tau \circ \theta_{t}(\widetilde{x})$, otherwise $\Theta_{t}(x)$ is the image of $x$ under the nearest-point projection from $\R P^{3} \setminus x_{0}$ to $\mathcal{Q}_{0}$.  Because $\theta_{t}$ maps $\theta_{t}^{-1}(D(\widetilde{x}_{0}))$ conformally to $D(\widetilde{x}_{0})$ and $\lim\limits_{t \rightarrow \infty} E_{2}(\theta_{t}) = 0$, the energy of $F \circ \Theta_{t}$ on the domain $\tau(\theta_{t}^{-1}(D(\widetilde{x}_{0})))$ goes to $0$ as $t \rightarrow \infty$.  We then have: 
\begin{equation}
\displaystyle \lim\limits_{t \rightarrow \infty} E_{2}(F \circ \Theta_{t}) = \lim\limits_{t \rightarrow \infty} \frac{1}{2} \int\limits_{0}^{\frac{\pi}{2}} \int\limits_{\partial B_{r}(p_{0})} |d(F \circ \Theta_{t})_{x}|^{2} dx dr = \frac{\pi}{2} \times 2 \times E_{2}(F|_{\mathcal{Q}_{0}}) < \pi B^{\star} + \varepsilon. 
\end{equation}

Because $\varepsilon > 0$ was arbitrary, this completes the proof. \end{proof}  

One can give a two-sided estimate in terms of the infimal area $B^{\star}$ in (\ref{rpn_inf_ineq_1}) for the infimum of the energy in homotopy classes of mappings of higher-dimensional real projective spaces.  We establish estimates of this type and characterize maps which realize the associated lower bound for energy in \cite{Hois1}. 


\end{document}